\documentclass[11pt]{article}
\usepackage[left=1in,right=1in,top=1in,bottom=1in]{geometry}

\usepackage{graphicx}
\usepackage{amsfonts}
\usepackage{url}
\usepackage{amsmath,amssymb}
\usepackage{enumerate}
\usepackage{color}
\usepackage{amsthm,amsmath}
\usepackage{bbm}
\usepackage[version=4]{mhchem}
\usepackage{xspace}
\usepackage{multicol}
\usepackage{subcaption}
\usepackage{rotating}

\usepackage{fancybox}

\usepackage{tikz}
\usetikzlibrary{arrows,shapes,positioning}
\usepackage{tikz-cd}

\usepackage[colorlinks]{hyperref}
\setlength{\fboxsep}{0pt}

\begin{document}

\newcommand{\Sp}{\mathcal{S}}
\newcommand{\C}{\mathcal{C}}
\newcommand{\R}{\mathcal{R}}

\newtheorem{thm}{Theorem}
\newtheorem{pro}[thm]{Proposition}
\newtheorem{lem}[thm]{Lemma}
\newtheorem{cor}[thm]{Corollary}

\theoremstyle{definition}
\newtheorem{dfn}[thm]{Definition}
\newtheorem{exa}[thm]{Example}
\newtheorem{rem}[thm]{Remark}

\newcommand{\tr}{'}

\title{Computing Weakly Reversible Deficiency Zero Network Translations Using Elementary Flux Modes}

\author{Matthew D. Johnston\thanks{Corresponding author: \tt{matthew.johnston@sjsu.edu}} \; and Evan Burton \\ \\  Department of Mathematics and Statistics \\ San Jos\'{e} State University \\ One Washington Square \\ San Jos\'{e}, CA, USA 95192}

\maketitle

\begin{abstract}
We present a computational method for performing structural translation, which has been studied recently in the context of analyzing the steady states and dynamical behavior of mass-action systems derived from biochemical reaction networks. Our procedure involves solving a binary linear programming problem where the decision variables correspond to interactions between the reactions of the original network. We call the resulting network a reaction-to-reaction graph and formalize how such a construction relates to the original reaction network and the structural translation. We demonstrate the efficacy and efficiency of the algorithm by running it on 508 networks from the European Bioinformatics Institutes' BioModels database. We also summarize how this work can be incorporated into recently proposed algorithms for establishing mono and multistationarity in biochemical reaction systems.
\end{abstract}

\section{Introduction}
\label{sec:intro}

A \emph{chemical reaction network} (CRN) is given by a directed graph where the vertices are complexes (i.e. linear combinations of the interacting species) and the edges are reactions (i.e. interactions between species). Under appropriate physical assumptions, such as spatial homogeneity and abundant molecularity, the system is often modeled by an autonomous system of ordinary differential equations in the concentrations of the chemical species. The use of such dynamical models is widespread in systems biology \cite{Alon2007,Ingalls}.
%A particularly common kinetic assumption is that of mass-action kinetics which produces a chemical reaction system with a polynomial right-hand side known as a \emph{mass-action system}.

The relationship between the structural properties of a CRN and the dynamical and steady state behavior of the resulting dynamical systems have been studied from a variety of perspectives, including flux balance analysis \cite{Orth2010}, extreme pathway analysis \cite{Wiback2002}, and stoichiometric network analysis \cite{C1,C2}. Recent study has focused on a structural parameter known as the \emph{deficiency}. It is known that, if a mass-action system is weakly reversible and has a deficiency of zero, then it necessarily has complex-balanced steady states (\emph{Deficiency Zero Theorem}, \cite{Feinberg1972,H}). Complex-balancing guarantees uniqueness and stability of steady states for all parameter values and initial conditions, and also affords a simple monomial parametrization of the steady state set \cite{H-J1,C-D-S-S}. %The deficiency can be determined via network properties alone and does not require algebraic computation of the steady state set. 
Further connections between the deficiency and the steady states of mass-action systems have been established \cite{Feinberg1987,Feinberg1988,Feinberg1989,Feinberg1995-1,Feinberg1995-2,D-M,C-D-S-S}.

The study of the deficiency was recently initiated in \emph{generalized chemical reaction networks} (GCRNs) \cite{MR2012,MR2014}. In a GCRN, each vertex in the reaction graph is associated with two potentially distinct complexes, one for the stoichiometry and one for the kinetic rate of the reaction. Surprisingly, for weakly reversible generalized mass-action systems which have a stoichiometric and kinetic-order deficiency of zero, we still obtain a simple monomial parametrization of the steady state set. A process for relating CRNs and GCRNs, called \emph{network translation}, was furthermore established in \cite{J1}. Network translation consists of restructuring a given CRN in such a way that the resulting network (a GCRN) can be used to guarantee dynamical and steady state properties of the original CRN. The process has been utilized to establish connections between chemical reaction network theory \cite{Feinberg1979}, the algebraic study of toric varieties \cite{C-D-S-S,M-D-S-C,D-M2018}, and biochemical reaction modeling \cite{J2,Tonello2017,C-Sh}. %In particular, the author provided several biochemical reaction networks which, while not weakly reversible or deficiency zero, can be corresponded to GCRNs which are weakly reversible and deficiency zero.
%It was shown that by translating to a weakly reversible and deficiency zero GCRN,
%This result furthered the connection between the study of the topological network structure and the algebraic structure of the steady states, including the capacity for \emph{toric steady states} \cite{C-D-S-S,M-D-S-C,D-M2018}.
Recent work has also established a deficiency-based method for constructing rational parametrizations of steady state sets for a broad class of mass-action systems \cite{J-M-P}.

\begin{figure}[t]
\begin{center}
\includegraphics[width=16cm]{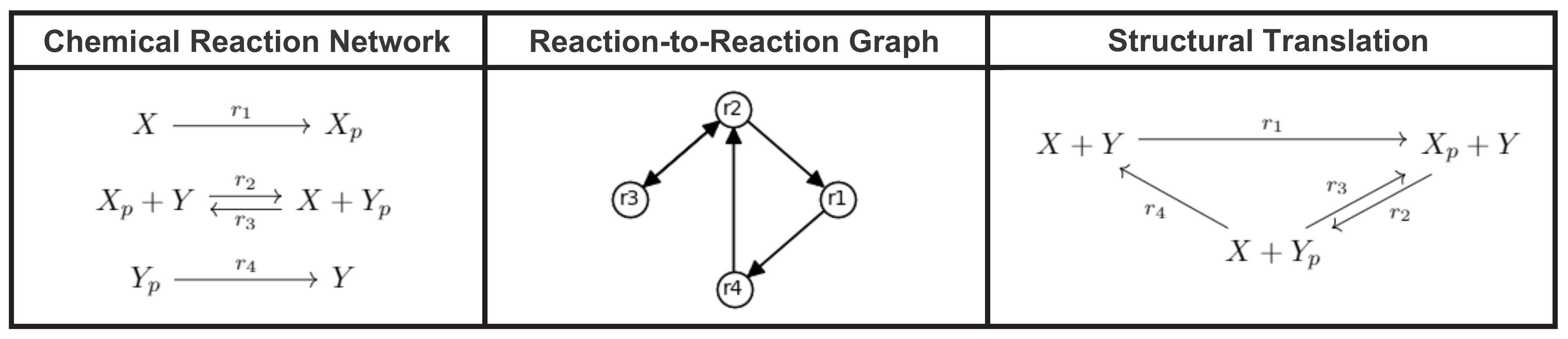}
\caption{A chemical reaction network (left) corresponding to a histidine kinase network where $X$ and $Y$ are two signaling proteins and $p$ is a phosphate group \cite{C-F-M-W2016}. This CRN has elementary flux modes $\{r_1, r_2, r_4\}$ and $\{r_2, r_3\}$ which correspond to the directed cycles in the reaction-to-reaction graph (center). The structural translation (right) has the same elementary flux modes and stoichiometric vectors as the CRN but the elementary flux modes correspond to cycles.}
\label{figure1}
\end{center}
\end{figure}

In this paper, we focus on computational methods for performing the structural component of network translation, which we call \emph{structural translation}. In general, given a biochemical reaction network of realistic scale, it is challenging to determine a suitable (e.g. weakly reversible, deficiency zero) structural translation.  We extend the recent computational work of \cite{J2,Tonello2017} by introducing an elementary flux mode-based approach for performing structural translation. To accomplish this, we introduce a directed graph (called a \emph{reaction-to-reaction graph}) which treats the reactions of a network as vertices and uses the elementary modes to form directed cycles. Under certain rules on the connections on this graph, a weakly reversible and deficiency zero structural translation of the original network can then be constructed. We formulate the construction of this reaction-to-reaction graph as a binary linear programming problem. Such problems can be solved in polynomial time in the number of constraints by Lenstra's algorithm \cite{Lenstra1983}.

Consider the histidine kinase system given in Figure \ref{figure1} (left), which is modified from an example in \cite{C-F-M-W2016} and reproduced in \cite{J-M-P}. %:
%\begin{equation}
%\label{histidine}
%\begin{tikzcd}
%X \arrow[r, "r_1"] & X_p \\[-0.15in]
%X_p + Y \arrow[r,  yshift=+0.5ex,"r_2"] & X + Y_p %\arrow[l,  yshift=-0.5ex,"r_3"]  \\[-0.15in]  
%Y_p \arrow[r, "r_4"] & Y
%\end{tikzcd}
%\end{equation}
%where $X$ and $Y$ are signaling proteins, and $p$ is a phosphate group.
This network has two elementary flux modes (sets of reactions which balance the net stoichiometry change), namely, $e_1 = \{ r_1, r_2, r_4\}$ and $e_2 = \{ r_2, r_3 \}$. Consistent with these elementary flux modes, we can construct the \emph{reaction-to-reaction} graph given in Figure \ref{figure1} (center) where the reactions are treated as vertices and there is a minimal cycle on each elementary flux mode of the original network. %:\\[-0.25in]
%\begin{figure}[h!]
%\begin{center}
%\includegraphics[height=1.25in]{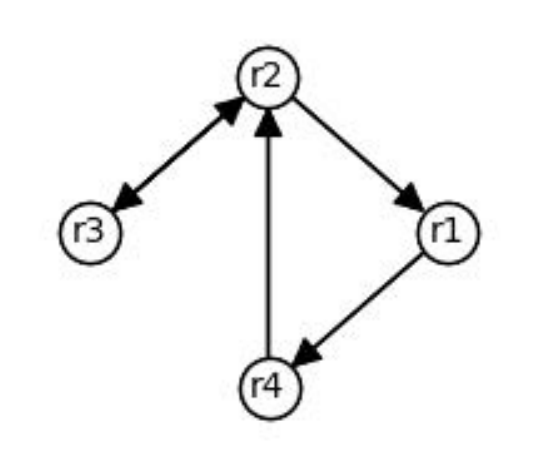}
%\end{center}
%\end{figure}\\[-0.25in]
%In this reaction-to-reaction graph we impose that every cycle has minimal support on the elementary flux modes so that, for instance, $r_1 \to r_2 \to r_4 \to r_1$ forms a cycle. %We call this condition elementary mode compatibility (\emph{EM-compatibility}).
From this reaction-to-reaction graph, we can then construct the structural translation of the original network given in Figure \ref{figure1} (right). Notably, the structural translation is weakly reversible and deficiency zero, while the original network is neither.% The computational algorithm is able to quickly perform the steps of converting a CRN to a reaction-to-reaction graph, and then to another CRN.%of \eqref{histidine}:
%\begin{equation} \label{condensedcrn}
%\begin{tikzcd}
%X+Y \arrow[rr,yshift=+0.5ex,"r_1"] & & 
%X_p+Y \arrow[dl,xshift=1ex,"r_2"]\\
% & X+Y_p \arrow[ur,xshift=-1ex,"r_3"] \arrow[ul,"r_4"] &
%\end{tikzcd}
%\end{equation}
%The network \eqref{condensedcrn} has the property that $r_i \to r_j$ is an edge in the reaction-to-reaction graph if and only if the product of $r_i$ coincides with the source of $r_j$.
%Although the complexes of \eqref{condensedcrn} differ from those of \eqref{histidine}, the structure of \eqref{condensedcrn} can be used establish steady state properties of the mass-action system corresponding to \eqref{histidine} \cite{J-M-P}.

%It can be easily checked that the network \eqref{condensedcrn} is weakly reversible and deficiency zero while the original network \eqref{histidine} is neither. The results of \cite{J-M-P} guarantee that the mass-action system corresponding to \eqref{histidine} has a rational parametrization which can be constructed from analysis of the network \eqref{condensedcrn}.

%The procedure outlined in this paper consists of formulating the structural portion of network translation as a \emph{binary linear programming} (BLP) problem in only the edges of the reaction-to-reaction graph. The size of the resulting BLP problem consequently scales as $O(n^2)$ where $n$ is the number of reactions in the original CRN. This represents a significant improvement over \cite{Tonello2017}. We demonstrate this improvement with examples.

The paper is organized as follows. In Section \ref{sec:background}, we introduce the terminology and background results relevant to chemical reaction networks and structural translation. In Section \ref{sec:main}, we introduce the notion of a reaction-to-reaction graph, demonstrate how it is related to the structure of a chemical reaction network, and introduce a binary linear programming framework for constructing them. In Section \ref{sec:examples}, we present the output of a run of the algorithm on the European Bioinformatics' BioModels database and detail a few biochemical examples. In Section \ref{sec:conclusions}, we summarize the results of the paper. In Appendix \ref{app:a}, we demonstrate how the results of our algorithm may be utilized to construct steady state parametrizations of mass-action systems according to Lemma 12 and Theorem 14 of \cite{J-M-P} and, when possible, establish mono or multistationarity according to the Corollary 2 of \cite{C-F-M-W2016}.

We use the following notation throughout:
\begin{itemize}
\item
$\mathbb{R}_{> 0}^n = \{ (x_1, \ldots, x_n) \mid x_i > 0, \; i = 1, \ldots, n\}$
\item
$\mathbb{R}_{\geq 0}^n = \{ (x_1, \ldots, x_n) \mid x_i \geq 0, \; i = 1, \ldots, n\}$
\item
$\mathbf{0}^{m \times n}$ is the $m \times n$ matrix with $\mathbf{0}_{i,j} = 0$ for all $i = 1, \ldots, m$ and $j = 1, \ldots, n$
\item
$I^{m \times m}$ is the $m$-dimensional identity matrix
\item
For an indexed set $\mathcal{X} \subseteq \{ X_1, \ldots, X_n \}$, $\mbox{supp}(\mathcal{X}) = \{ i \in \{1, \ldots, n \} \mid X_i \in X\}$.
\item
For a vector $\mathbf{v} \in \mathbb{R}_{\geq 0}^n$, $\mbox{supp}(\mathbf{v}) = \{ i \in \{ 1, \ldots, n \} \mid v_i > 0 \}$
\end{itemize}

\section{Background}
\label{sec:background}

In this section, we present the terminology relevant to chemical reaction networks, structural translation, and elementary flux modes. Note that we only introduce the terminology required to establish the computational program presented in Section \ref{sec:blp}. In particular, we do not use the full generality of generalized chemical reaction networks as given in \cite{MR2014,J-M-P}.

\subsection{Chemical Reaction Networks}
\label{sec:crn}

We define a \emph{species set} $\Sp = \{ X_1, \ldots, X_m\}$ and a \emph{complex set} $\C = \{ y_1, \ldots, y_n\}$ whose elements (complexes) are linear combinations of the species, i.e.
\[y_i = \sum_{j=1}^m y_{ij} X_j, \hspace{0.2in} i=1,\ldots, n.\]
The coefficients $y_{ij} \in \mathbb{Z}_{\geq 0}$ are called \emph{stoichiometric coefficients}. Allowing a slight abuse of notation, we let $y_i$ denote both the complex itself and the corresponding \emph{complex vector} $y_i = (y_{i1}, \ldots, y_{im}) \in \mathbb{Z}_{\geq 0}^m$. The \emph{reaction set} is given by $\R = \{ r_1, \ldots, r_r \} \subseteq \C \times \C$ where we represent individual reactions as either ordered pairs of complexes (i.e. $r_k = (y_i,y_j)$) or directed edges (i.e. $r_k = y_i \to y_j$). It will occasionally be convenient to use mappings $s, p: \mbox{supp}(\R) \mapsto \mbox{supp}(\C)$ such that $s$ (respectively $p$) maps the source (respectively product) of each reaction to the corresponding complex, i.e. $r_k = y_{s(k)} \to y_{p(k)}$. A \emph{chemical reaction network} (CRN) is given by the triple $(\Sp, \C, \R)$.

%We define the set of chemical species $\Sp = \{ X_1, \ldots, X_n \}$ and the set of complexes $\C = \{ C_1, \ldots, C_m \}$ where each complex is a nonnegative linear combination of the species, i.e.
%\[C_i = \sum_{j=1}^n y_{ij} X_j.\]
%The coefficients $y_{ij} \in \mathbb{Z}_{\geq 0}$ are called \emph{stoichiometric coefficients} and the vectors $y_{i} = (y_{i1}, \ldots, y_{in}) \in \mathbb{Z}_{\geq 0}^{n}$ are called \emph{complex vectors}. We define the set of reactions $\R = \{ r_1, \ldots, r_r \}$ where each reaction is of the form
%\[r_k: \; \; C_i \; \longrightarrow C_j \]
%for some $C_i, C_j \in \C$, $C_i \not= C_j$. Alternatively, we may represent reactions as an ordered pair of complexes, e.g. $r_k = (C_i,C_j)$. We will call $C_i$ the \emph{source} of $r_k$ and $C_j$ the \emph{target} of $r_k$. A \emph{chemical reaction network} (CRN) is given by the triple $(\Sp,\C,\R)$.

%It will also be useful to interpret a CRN as a directed graph with specific interpretations of the vertices and edges. Specifically, for a directed graph $G = (V,E)$ with vertex set $V = \{ 1, \ldots, n \}$ and edge set $E \subseteq V \times V$,

The \emph{reaction graph} of a CRN is the directed graph $G = (V,E)$ where the vertices are the complexes (i.e. $V = \C$) and the edges are the reactions (i.e. $E = \R$). %Allowing a slight abuse of notation, we will let $y$ correspond to both the mapping from vertices to the complexes and the vector of stoichiometric coefficients, i.e. we let $y(i) = y_i \in \mathbb{Z}_{\geq 0}^n$ as defined above. %In particular, we may define the reaction graph of a CRN as $G = (\C, \R)$ and define $\R \subseteq \C \times \C$. Note, however, that this will not be true when we consider generalized chemical reaction networks.
The connected components of the reaction graph of a CRN are called \emph{linkage classes} while the strongly connected components are called \emph{strong linkage classes}. We will let $\ell$ denote the number of linkage classes in a CRN. A CRN is said to be \emph{weakly reversible} if its linkage classes and strong linkage classes coincide. To each reaction $r_k = y_{s(k)} \to y_{p(k)} \in \mathcal{R}$ we may associate a \emph{reaction vector} $y_{p(k)} - y_{s(k)} \in \mathbb{Z}^{m}$. The \emph{stoichiometric matrix} of a CRN is given by the matrix $\Gamma \in \mathbb{Z}^{m \times n}$ with columns defined by $\Gamma_{\cdot, k} = y_{p(k)} - y_{s(k)}$. The \emph{stoichiometric subspace} of a CRN is given by $S = \mbox{im}(\Gamma)$.

Consider a time-dependent vector of chemical concentrations $\mathbf{x}(t) = (x_1(t), \ldots, x_m(t)) \in \mathbb{R}_{\geq 0}^m$. Assuming sufficient molecularity of chemical species and mass-action kinetics, it is common to assign each reaction $r_i \in \R$ a rate constant $k_i \in \mathbb{R}_{> 0}$ and model the evolution of $\mathbf{x}(t)$ via the \emph{mass-action system}
\begin{equation}
\label{eqn:de}
\frac{d\mathbf{x}}{dt} = \Gamma R(\mathbf{x})
\end{equation}
where $R(\mathbf{x}) \in \mathbb{R}_{\geq 0}^r$ has entries $R_i(\mathbf{x}) = \prod_{j=1}^m x_j^{[y_{s(i)}]_j}$ \cite{G-W1}. Other widely-used kinetic choices for $R(\mathbf{x})$ include Michaelis-Menten and Hill kinetics \cite{M-M,Hi}. Note that $d\mathbf{x}/dt \in S$ for all $t \geq 0$ and consequently solutions are restricted to \emph{stoichiometric compatibility classes}, i.e. $\mathbf{x}(t) \in (S+ \mathbf{x}_0) \cap \mathbb{R}_{\geq 0}^m$ for $t \geq 0$. The analysis we perform in this paper will focus largely on the structural aspects of CRNs rather than the dynamical equations \eqref{eqn:de}. That is, we focus on $\Gamma$ rather than $R(\mathbf{x})$.% We note that it follows from \eqref{eqn:de} and the construction of $\Gamma$ that $d\mathbf{x}/dt \in S$ so that, given an initial condition $\mathbf{x}(0) = \mathbf{x}_0$, trajectories $\mathbf{x}(t)$ are confined to \emph{stoichiometric compatibility classes} $\mathsf{C}_{\mathbf{x}_0} = (S + \mathbf{x}_0) \cap \mathbb{R}_{\geq 0}^m$ for all $t \geq 0$.% where $R_i(\mathbf{x}) = k_i \prod_{j=1}^m x_j^{y_{ij}}$.

We may further factor the stoichiometric matrix $\Gamma$ by introducing a \emph{complex matrix} $Y \in \mathbb{Z}_{\geq 0}^{m \times n}$ with columns $Y_{\cdot,i} = y_i$ and an \emph{incidence matrix} $I_a \in \{ -1,0,1 \}^{n \times r}$ with entries $[I_a]_{ik} = -1$ if $s(k) = i$, $[I_a]_{ik} = 1$ if $p(k) = i$, and $[I_a]_{ik} = 0$ otherwise. It can be easily verified that $\Gamma = YI_a$. The \emph{deficiency} of a CRN is a nonnegative parameter defined by $\delta = \mbox{dim}(\mbox{ker} (Y) \cap \mbox{im} (I_a))$. Alternatively, the deficiency can be computed by the formula $\delta = n - \ell - \mbox{dim}(S)$ (see \cite{J1}). The deficiency was first introduced in \cite{Feinberg1972,H} and has been used extensively since in the context of steady states of mass-action systems \cite{H-J1,Feinberg1987,Feinberg1989,Feinberg1995-1,Feinberg1995-2,J1,MR2012}.

%We will be primarily interested in CRNs which satisfy the following property.

%\begin{dfn}
%\label{def:reconcilable}
%Consider a CRN with elementary modes $E = \{ e_1, \ldots, e_p\} \subseteq \mathbb{R}_{\geq 0}^r$. We say that the CRN is \emph{elementary mode reconcilable} (EM-reconcilable) if there are constants $a_j > 0$, $j=1, \ldots, r$, such that, for all $i = 1, \ldots, p$ and $j = 1, \ldots, r$, either $e_{ij}a_j = 0$ or $e_{ij}a_j = 1$.
%\end{dfn}

%\noindent That is, a CRN is EM-reconcilable if the elementary modes may be scaled in such a way that every reaction is given the same weight in every elementary mode in which it appears.
Consider the following example.

\begin{exa}
\label{example1}
Reconsider the histidine kinase network from Figure \ref{figure1} (left). %, which is modified from a histidine network in \cite{C-F-M-W2016}:
%\begin{equation}
%\label{histidine}
%\begin{tikzcd}
%X \arrow[r, "r_1"] & X_p \\[-0.15in]
%X_p + Y \arrow[r,  yshift=+0.5ex,"r_2"] & X + %Y_p \arrow[l,  yshift=-0.5ex,"r_3"]  \\[-0.15in]  
%Y_p \arrow[r, "r_4"] & Y
%\end{tikzcd}
%\end{equation}
%where $X$ and $Y$ are signaling proteins, and $p$ is a phosphate group.
We have the following sets:
 \begin{equation}
 \label{set1}
 \begin{aligned}
 \Sp & = \{ X, X_p, Y, Y_p \}\\
 \C & = \{ X, X_p, X_p + Y, X+Y_p, Y_p, Y \}\\
 \R & = \{ X \to X_p, X_p + Y \to X + Y_p, X + Y_p \to X_p + Y, Y_p \to Y\}.
 \end{aligned}
 \end{equation}
The network has six complexes ($n = 6$) and three linkage classes ($\ell = 3$). The second linkage class is strongly connected while the first and third are not. It follows that the network is not weakly reversible. Using the ordering of species and reactions given above, we can compute that the network has the following structural matrices
\begin{equation}
\label{gamma1}
\Gamma = \left[ \begin{array}{cccc} -1 & 1 & -1 & 0 \\ 1 & -1 & 1 & 0 \\ 0 & -1 & 1 & 1 \\ 0 & 1 & -1 & -1 \end{array} \right] = \left[ \begin{array}{cccccc} 1 & 0 & 0 & 1 & 0 & 0 \\ 0 & 1 & 1 & 0 & 0 & 0 \\ 0 & 0 & 1 & 0 & 0 & 1 \\ 0 & 0 & 0 & 1 & 1 & 0 \end{array} \right] \left[ \begin{array}{cccc} -1 & 0 & 0 & 0 \\ 1 & 0 & 0 & 0 \\ 0 & -1 & 1 & 0 \\ 0 & 1 & -1 & 0 \\ 0 & 0 & 0 & -1 \\ 0 & 0 & 0 & 1 \end{array} \right] = Y I_a.
\end{equation}
We have that $\mbox{dim}(S) = 2$ so that the deficiency is $\delta = n - \ell - \mbox{dim}(S) = 6 - 3 - 2 = 1$. Alternatively, we can compute that $\delta = \mbox{dim}(\mbox{ker} (Y) \cap \mbox{im} (I_a)) = 1$.
\end{exa}

\subsection{Structural Translation}
\label{sec:translation}

%The process of network translation was introduced in \cite{J1} as a method for relating a CRN to a \emph{generalized chemical reaction network} \cite{MR2012,MR2014}. In the main theory outlined in this paper, we only require the \emph{structural} components of \eqref{eqn:de} (i.e. the $\Gamma$). 
%Network translation as defined in \cite{J1,J2,Tonello2017,J-M-P}, however, depends upon both the structural and kinetic components of \eqref{eqn:de} (i.e. $\Gamma$ and $R(\mathbf{x})$, respectively) while we will only require the structural component. %In a generalized CRN, every vertex is assigned two complexes, a stoichiometric complex and a kinetic-order complex. For the method we introduce, we will only need the stoichiometric complexes.
We introduce the following structural notion of network translation, which is weaker than those presented in \cite{J1,J2,Tonello2017,J-M-P}. 

\begin{dfn}
\label{def:translation}
Consider two CRNs $(\Sp,\C,\R)$ and $(\Sp,\C',\R')$ with corresponding complex, incidence, and stoichiometric matrices $Y$, $I_a$, $\Gamma$, $Y'$, $I_a'$, and $\Gamma'$ as defined in Section \ref{sec:crn}. We say that $(\Sp,\C,\R)$ and $(\Sp,\C',\R')$ are \emph{structural translations} of one another if $\Gamma = YI_a = Y'I_a' = \Gamma'$.
\end{dfn}

%In this paper, we do not require consideration of the kinetic complexes of a generalized CRN \cite{MR2012,MR2014}. It will be sufficient to restrict our consideration to the stoichiometry of the two networks. We show in Appendix \ref{} how the simplified process of network translation given by Definition \ref{def:translation} can be used to recover the details of network translation as utilized in \cite{J-M-P}. In particular, we show that the translated network $(\Sp,\C',\R')$ corresponds to the \emph{condensed reaction network} of the translated network as utilized in \cite{J-M-P}.
\noindent Intuitively, two CRNs are structural translations of one another if, despite potentially different complexes and reactions (i.e. the $Y$ and $I_a$), they have the same reaction vectors (i.e. columns of $\Gamma$). In practice, we will typically have a CRN $(\Sp,\C,\R)$ given to us and want to construct a CRN $(\Sp,\C',\R')$ which has specific structure properties. Consequently, we will typically refer to $(\Sp,\C,\R)$ as the original network and $(\Sp,\C',\R')$ as the structural translation.

Network translation can be visualized by the operation of adding or subtracting linear combinations of species, known as \emph{translation complexes}, from individual reactions. %This operation preserves reaction vectors since there is not net gain or loss in the species in the translation vector.
The summation of the original network's complexes and corresponding translation complexes then produces the translated network's complexes. Formally, we let $\Lambda = \{ \alpha_1, \ldots, \alpha_r \}$ where $\alpha_i \in \mathbb{R}^m$, $i = 1, \ldots, r$, denote a set of \emph{translation complexes}. We represent the operation of translating the reaction $r_k = y_{s(k)} \to y_{p(k)} \in \R$ by the translation complex $\alpha_k$ as
\[\begin{tikzcd}
y_{s(k)} \arrow[r, "r_k"] & y_{p(k)} & (+\alpha_k)
\end{tikzcd}\]
for $k = 1, \ldots, r$. This operation produces the translated reactions $y_{s(k)} + \alpha_k \to y_{p(k)} + \alpha_k \in \mathcal{R}'$ and translated complexes $y_{s(k)} + \alpha_k, y_{p(k)} + \alpha_k \in \C'$. Note that this may produce repeated complexes and therefore new connections in the corresponding reaction graph. Since the net stoichiometric change across each reaction is unaltered by this operation (i.e. $y_{p(k)} - y_{s(k)} = (y_{p(k)} + \alpha_k) - (y_{s(k)} + \alpha_k) = y'_{p(k)} - y'_{s(k)}$) we have that $\Gamma = \Gamma'$ and the networks are structural translations of one another.

%A structural translation corresponds to the \emph{stoichiometric reaction graph} in \cite{J1,Tonello2017} and the \emph{condensed reaction network} in \cite{J-M-P}. A fuller description of network translation and generalized CRNs, and how to recover these classical constructs with the methods described in Section \ref{sec:main}, is given in Appendix \ref{appendixa}

%Notably, a translated network $(\Sp,\C',\R')$ may be weakly reversible and deficiency zero while the original network $(\Sp,\C,\R)$ is neither.

Consider the following example.

%The process of network translation corresponds CRNs to GCRNs with equivalent dynamical and steady state properties under mass-action kinetics \cite{J1}. The notion was generalized to the following in \cite{J-M-P}.

%\begin{dfn}
%\label{def:translation}
%Consider a CRN $(G,y,r)$ with reaction graph $G=(V,E)$, complex set $\C$, and reaction set $\R$, and a GCRN $(G^{\intercal},y^{\intercal},\tilde y^{\intercal},r^{\intercal})$ with reaction graph $G^{\intercal}=(V^{\intercal},E^{\intercal})$, stoichiometric complex set $\C^{\intercal}$, kinetic complex set $\tilde{\C}^{\intercal}$, and reaction set $\R^{\intercal}$. Then the GCRN is a \emph{translation} of the CRN if there exists a map $g \colon \R \to \R^{\intercal}$ such that $g(r_k) = r^{\intercal}_{k^{\intercal}}$ with $r_k = i \to j$ and $r^{\intercal}_{k^{\intercal}} = i^{\intercal} \to j^{\intercal}$ implies (i) $y^{\intercal}_{j^{\intercal}}-y^{\intercal}_{i^{\intercal}} = y_j - y_i$ and (ii) $\tilde y^{\intercal}_{i^{\intercal}} = y_i$.
%\end{dfn}

%Consider the following example.

\begin{exa}
\label{example3}
Reconsider the histidine kinase network from Figure 1 (left) % considered in Section \ref{sec:intro} and Example \ref{example1} in Section \ref{sec:crn}  
taken with the following translation scheme
\begin{equation} \label{histidine-translation}
\begin{tikzcd}
X \arrow[r, "r_1"] & X_p & & (+Y)\\[-0.15in]
X_p + Y \arrow[r,  yshift=+0.5ex,"r_2"] & X + Y_p \arrow[l,  yshift=-0.5ex,"r_3"]  & & (+\emptyset)\\[-0.15in]  
Y_p \arrow[r, "r_4"] & Y & & (+X)
\end{tikzcd}
\end{equation}
That is, we translate $r_1$ by $\alpha_1 = Y$, translate $r_2$ and $r_3$ by $\alpha_2 = \alpha_3 = \emptyset$, and translate $r_4$ by $\alpha_4 = X$. This produces the structural translation in Figure \ref{figure1} (right).  %following translated network $(\Sp,\C',\R')$:
%\begin{equation} \label{condensedcrn}
%\begin{tikzcd}
%X+Y \arrow[rr,yshift=+0.5ex,"r_1"] & & 
%X_p+Y \arrow[dl,xshift=1ex,"r_2"]\\
% & X+Y_p \arrow[ur,xshift=-1ex,"r_3"] \arrow[ul,"r_4"] &
%\end{tikzcd}
%\end{equation}
Notably, the stoichiometric changes across each reaction in the two networks are identical. Formally, %taking the ordering of species and complexes for \eqref{histidine} as in \eqref{set1}, we have
%\begin{equation}
%\label{gamma1}
%\Gamma = Y I_a = \left[ \begin{array}{cccccc} 1 & 0 & 0 & 1 & 0 & 0 \\ 0 & 1 & 1 & 0 & 0 & 0 \\ 0 & 0 & 1 & 0 & 0 & 1 \\ 0 & 0 & 0 & 1 & 1 & 0 \end{array} \right] \left[ \begin{array}{cccc} -1 & 0 & 0 & 0 \\ 1 & 0 & 0 & 0 \\ 0 & -1 & 1 & 0 \\ 0 & 1 & -1 & 0 \\ 0 & 0 & 0 & -1 \\ 0 & 0 & 0 & 1 \end{array} \right] = \left[ \begin{array}{cccc} -1 & 1 & -1 & 0 \\ 1 & -1 & 1 & 0 \\ 0 & -1 & 1 & 1 \\ 0 & 1 & -1 & -1 \end{array} \right]
%\end{equation}
for the network in Figure \ref{figure1} (right), we have the sets
\[\begin{aligned}
\Sp & = \{ X, X_p, Y, Y_p \} \\
\C & = \{ X+Y, X_p + Y, X+Y_p\}.
\end{aligned}\]
Using this ordering of species and complexes, we can determine the following structural matrices:
\begin{equation}
\label{gamma2}
\Gamma'  = \left[ \begin{array}{cccc} -1 & 1 & -1 & 0 \\ 1 & -1 & 1 & 0 \\ 0 & -1 & 1 & 1 \\ 0 & 1 & -1 & -1 \end{array} \right] = \left[ \begin{array}{ccc} 1 & 0 & 1 \\ 0 & 1 & 0 \\ 1 & 1 & 0 \\ 0 & 0 & 1 \end{array} \right] \left[ \begin{array}{cccc} -1 & 0 & 0 & 1 \\ 1 & -1 & 1 & 0 \\ 0 & 1 & -1 & -1 \end{array} \right]  = Y' I_a'
\end{equation}
Since $\Gamma = \Gamma'$ where $\Gamma$ is from \eqref{gamma1}, we have that the networks in Figure \ref{figure1} (left) and (right) are structural translations of one another by Definition \ref{def:translation}.

It is notable that the CRN in Figure \ref{figure1} (right) is weakly reversible and deficiency zero, while the original CRN in Figure \ref{figure1} (left) is not weakly reversible and has a deficiency of one. The structure of the CRN in Figure \ref{figure1} (right) can be used to establish properties about the steady state set of the mass-action system \eqref{eqn:de} corresponding to the network in Figure \ref{figure1} (left) \cite{J1,J-M-P,M-D-S-C,C-F-M-W2016}. We outline some of these methods in Appendix \ref{app:a}.
\end{exa}

 \subsection{Elementary Flux Modes}
 \label{sec:em}

The following structural property of CRNs will factor significantly in our construction of structural translations in Section \ref{sec:main}.

\begin{dfn}
\label{def:em}
Consider a CRN $(\Sp,\C,\R)$ with stoichiometric matrix $\Gamma$ and incidence matrix $I_a$. Then:
\begin{enumerate}
\item
A vector $e_i \in \mathbb{R}_{\geq 0}^r$ is an \emph{elementary flux mode} of the CRN if $e_i \in \mbox{ker}(\Gamma)$ and $e_i$ is not a convex combination of any other $e_j, e_k \in \mbox{ker}(\Gamma) \cap \mathbb{R}_{\geq 0}^r$. %, i.e. $e_i \not= \lambda e_j + (1 - \lambda)e_k$ for any $\lambda \in (0,1)$, $e_j, e_k \in \mbox{ker}(\Gamma) \cap \mathbb{R}_{\geq 0}^r$, $e_j \not= e_k$.
The set of elementary flux modes of a CRN will be denoted $\mathcal{E} = \{e_1, \ldots, e_p \}$.
\item
The \emph{elementary flux cone} is defined as $\mbox{cone}(\mathcal{E}) = \mbox{ker}(\Gamma) \cap \mathbb{R}_{\geq 0}^r$.
\item
An elementary flux mode $e_i \in \mathcal{E}$ is called a \emph{cyclic generator} of the CRN if $e_i \in \mbox{ker}(I_a)$.
\item
An elementary flux mode $e_i \in \mathcal{E}$ is called a \emph{stoichiometric generator} of the CRN if $e_i \not\in \mbox{ker}(I_a)$.
\item
The set of elementary flux modes $\mathcal{E}$ is \emph{unitary} if every entry of every $e_i \in \mathcal{E}$ is a one or a zero.
\item
The set of elementary flux modes $\mathcal{E}$ \emph{covers} the reaction set $\R$ if $\mbox{cone}(\mathcal{E}) \cap \mathbb{R}_{> 0}^r \not= \emptyset$.
\end{enumerate}
\end{dfn}

\noindent Note that the set of elementary modes $\mathcal{E} = \{ e_1, \ldots, e_p \}$ consists of the extremal generators of the elementary flux cone, $\mbox{cone}(\mathcal{E})$. %We may interpret the elementary flux modes with integer entries as sequences of reactions which, if taken in order the number of times corresponding to their support value, would result in no net gain or loss of any species. A cyclic generator furthermore has the property that this sequence of reactions corresponds to a directed cycle in the reaction graph of the CRN. Note that having $\mathcal{E}$ cover $\R$ is necessary for \eqref{eqn:de} to have a positive steady state.

In this paper, we consider only unitary elementary flux modes. In such cases, we have that $e_i$ is completely determined by $\mbox{supp}(e_i)$ and, consequently, we will allow $e_i$ to correspond to both the elementary flux mode and its support, e.g. we will use $e_i = (1, 0, 1, 1, \ldots)$ and $e_i = \{ r_1, r_3, r_4, \ldots \}$ interchangeable. We may interpret unitary elementary flux modes as sets of reactions which, if taken in any order, would result in no net gain or loss of any species. A cyclic generator furthermore has the property that this sequence of reactions corresponds to a directed cycle in the reaction graph of the CRN. %Any directed cycle in the reaction graph produces a unitary elementary mode which is a cyclic generator of the CRN. Consequently, we will assume without loss of generality that all cyclic generators of a CRN are unitary. 
Elementary flux modes have played a significant role recently in metabolic engineering, although efficient computation of the set $\mathcal{E}$ remains challenging \cite{Zang2013}. 

%\begin{dfn}
%Given a set of elementary modes $E = \{ e_1, \ldots, e_p \}$, an elementary mode $e_i \in E$ is called a \emph{cyclic generator} of the elementary flux cone if $e_i \in \mbox{ker}(I_a)$ a \emph{stoichiometric generator} otherwise. A CRN is said to have \emph{unitary} elementary modes if there if it has a set of elementary modes $e_i$ with only zero and one entries, and the set of elementary modes is said to \emph{cover} the set $\R$ if, for every $r_k \in \R$, there is at least one $e_i \in E$ such that $e_{ik} > 0$.
%\end{dfn}

Consider the following example.

\begin{exa}
Reconsider the histidine kinase example given in Figure \ref{figure1} (left), and the structural translation given in Figure \ref{figure1} (right). Also consider the corresponding matrices $Y$ and $I_a$ given in \eqref{gamma1} and $Y'$ and $I_a'$ given in \eqref{gamma2}. Since $\Gamma = \Gamma'$, we have that the elementary modes of the two CRNs coincide. We can compute that $e_1 = (1,1,0,1)$ and $e_2 = (0,1,1,0)$. Since $e_1$ and $e_2$ only consist of zeros and ones, we have that the CRNs have unitary elementary modes. We therefore write $e_1 = \{ r_1, r_2, r_4\}$ and $e_2 = \{ r_2, r_3\}$. Furthermore, since $\mbox{ker}(\Gamma) \cap \mathbb{R}_{> 0}^r \not= \emptyset$, we have that $\mathcal{E}$ covers $\R$.

For the CRN in Figure \ref{figure1} (left) $e_1$ does not correspond to a cycle but $e_2$ does so that $e_1$ is a stoichiometric generator of the CRN, while $e_2$ is a cyclic generator. For the CRN in Figure \ref{figure1} (right), we have that both $e_1$ and $e_2$ correspond to cycles so that $e_1$ and $e_2$ are both \emph{cyclic} generators of the CRN. The structural translation scheme \eqref{histidine-translation} therefore converted the stoichiometric generator $e_1$ into a cyclic generator. The primary objective of the methods presented in Section \ref{sec:main} will be to use structural translation to convert stoichiometric generators into a cyclic generators. Notably, if all of the stoichiometric generators are converted into cyclic generators then the deficiency of the resulting network is zero.
\end{exa}

\section{Main Results}
\label{sec:main}

In general, given a CRN $(\Sp,\C,\R)$, a structural translation $(\Sp,\C',\R')$ with desirable properties (e.g. weak reversibility, deficiency zero) is not known and therefore must be constructed. For biochemical reaction networks of realistic size, computational implementation is necessary.

Computational algorithms using mixed-integer linear programming (MILP) have been explored recently in \cite{J2,Tonello2017}. In \cite{J2}, the author presented a MILP program for performing network translation by reconstructing the reaction graph of the original network. The method, however, depended upon the translated network's complexes and the network's rate constants, both of which are typically not a priori known. The method introduced in \cite{Tonello2017}, by contrast, relies only upon knowledge of the network's elementary flux modes and attempts to convert the network's stoichiometric generators into cyclic generators. The method, however, requires a large number of decision variables and relies sensitively on the ordering of the reactions.

In this section, we present a novel computational method by which to compute structural translations. Our method depends upon a new CRN object which we call a \emph{reaction-to-reaction graph}. We show how this object relates to the underlying CRN and then introduce a \emph{binary linear programming} (BLP) problem on this graph which can be used to establish structural translations. This represents a significant improvement over existing methods since BLP problems can be solved in polynomial time in the number of constraints by Lenstra's algorithm \cite{Lenstra1983}.

\subsection{Reaction-to-Reaction Graph}

We introduce the following.

\begin{dfn}
A directed graph $G^\R = (V^\R, E^\R)$ is a \emph{reaction-to-reaction graph} of a CRN $(\Sp,\C,\R)$ if $V^\R = \R$ and $E^\R = \R \times \R$. %to be the graph such that its vertices are the reactions in the system and its edges given by $E = \{ (r_i, r_j)\, | \, r_i, r_j \in V, r_i \neq r_j, \mbox{ $r_i$'s product is $r_j$'s source} \} $. Our goal is to take the reactions on the support of elementary modes and construct cycles in $\mathfrak{R}$ such that these reactions form the only cycles. That is, if $r_1, r_2, r_3, r_4$ are on the support of an elementary mode, then these four reactions must form a cycle in $\mathfrak{R}$ with no proper subcycle.
Furthermore, we say that $(\Sp,\C,\R)$ and $G^\R$ are:
\begin{enumerate}
\item
\emph{product-to-source compatible} (PS-compatible) if, for any $r_i = y_{s(i)} \to y_{p(i)}$ and $r_j = y_{s(j)} \to y_{p(j)}$, $(r_i,r_j) \in E^\R$ if and only if $y_{p(i)} = y_{s(j)}$.
\item
\emph{common source compatible} (CS-compatible) if $y_{s(i)} = y_{s(j)}$ and $(r_k,r_i) \in E^\R$ implies $(r_k,r_j) \in E^\R$, i.e. if $r_i$ and $r_j$ have a common source complex then every reaction $r_k$ with an edge to $r_i$ has an edge to $r_j$.
\item
\emph{elementary flux mode compatible} (EM-compatible) if every minimal directed cycle in $G^\R$ corresponds to an elementary flux mode of $(\Sp,\C,\R)$.
\end{enumerate}
\end{dfn}

%The reaction-to-reaction graph is defined based on the underlying directed reaction graph $G = (V,E)$ and is independent of the embedding of the complexes into the vertices.

A reaction-to-reaction graph treats the reactions of a network as its vertices while the edges enforce additional conditions on the relationship with the underlying CRN (PS-, CS-, or EM-compatibility). The condition of PS-compatibility makes a correspondence between edges $(r_i, r_j) \in E^\R$ in the reaction-to-reaction graph and junctions of the following form in the reaction graph of the CRN:
\[\cdots \stackrel{r_i}{\longrightarrow} y \stackrel{r_j}{\longrightarrow} \cdots\]
The condition of CS-compatibility joins reactions from common source complexes, e.g.
\[\cdots \stackrel{r_k}{\longrightarrow} y \hspace{-0.05in} \begin{array}{c} {}^{r_i} \\[0.1in] {}_{r_j} \end{array} \hspace{-0.2in} \begin{array}{c}\nearrow \\[0.1in] \searrow\end{array} \]
forces $(r_k,r_i) \in E^\R$ and $(r_k,r_j) \in E^\R$. The condition of EM-compatibility forces a correspondence between elementary flux modes in the CRN and directed cycles in the reaction-to-reaction graph, although we note that the order of the reactions is not fixed in the reaction-to-reaction graph.

%Also notice that, for PS-compatibility, we introduce no edge from $r_i$ to any other reaction if $r_i$ leads to a strictly product complex in the reaction graph, i.e. if we have
%\[\cdots \stackrel{r_i}{\longrightarrow} C_{k}.\]

Our goal is to construct reaction-to-reaction graphs which are CS- and EM-compatible with a given CRN $(\Sp,\C,\R)$, and then enforce PS-compatibility to construct a network translation $(\Sp,\C',\R')$. Consider the following examples.
%Notice that the ordering of reactions in the elementary modes does not matter and, consequently, several reaction-to-reaction graphs may be possible given the elementary modes of a CRN. Also note that we strictly require that the cycles in the reaction-to-reaction be minimal, i.e. they can contain no subcycles which do not correspond to an elementary mode.

%A reaction-to-reaction graph is CS-compatible with a CRN if 

%\begin{thm}
%If a reaction-to-reaction graph $\mathfrak{R} = (V,E)$  is PS-compatible with a CRN $(\Sp,\C,\R)$, then it is CS-compatible with the CRN.
%\end{thm}

%Of particular interest, if we are given a reaction-to-reaction graph $\mathfrak{R}$ we may explicitly construct a PS-compatible CRN $(\Sp,\C,\R)$.

\begin{figure}[h!]
    \centering
    \begin{subfigure}[t]{0.35\textwidth}
        \centering
        \includegraphics[height=1.5in]{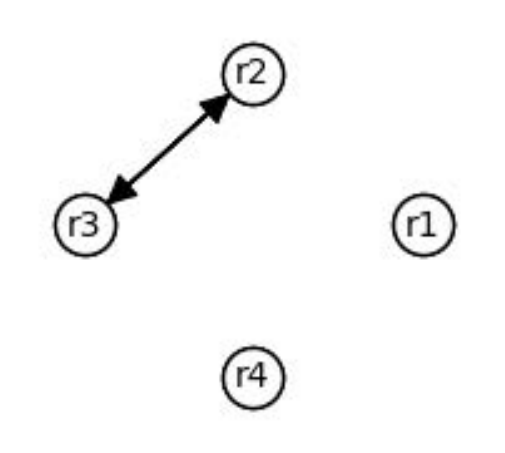}
%        \caption{}
    \end{subfigure}%
    ~ 
    \begin{subfigure}[t]{0.35\textwidth}
        \centering
        \includegraphics[height=1.5in]{figure_2.pdf}
%        \caption{}
    \end{subfigure}
    \caption{Two reaction-to-reaction graphs of four reactions, labeled $\{ r_1, r_2, r_3, r_4\}$. The reaction-to-reaction graph (a) is PS- and CS-compatible with the CRN in Figure \ref{figure1} (left) while the reaction-to-reaction graph (b) is EM- and CS-compatible with the CRN in Figure \ref{figure1} (left) and PS-, CS-, and EM-compatible with the CRN in Figure \ref{figure1} (right).}
    \label{figure2}
\end{figure}

\begin{exa}
\label{example5}
Reconsider the histidine network given in Figure \ref{figure1} (left). We can construct a reaction-to-reaction graph which is PS- and CS-compatible, but not EM-compatible, with this CRN by selecting the edges $E^\R = \{ (r_2,r_3), (r_3,r_2) \}$. This gives the reaction-to-reaction graph in Figure \ref{figure2} (left). Note that we do not include any interactions involving $r_1$ and $r_4$ since these reactions do not connect with any others in the reaction graph of the CRN.

Alternatively, we may construct a reaction-to-reaction graph which is EM- and CS-compatible but not PS-compatible to the CRN in Figure \ref{figure1} (left). We select edges such that there are minimal cycles on $e_1 = \{ r_1, r_2, r_4\}$ and $e_2 = \{ r_2, r_3\}$. Selecting $E^\R = \{ (r_1,r_2),(r_2,r_4),(r_4,r_1),(r_2,r_3),(r_3,r_2)\}$ gives the reaction-to-reaction graph given in Figure \ref{figure1} (right). It can be checked exhaustively that there is no reaction-to-reaction graph which is all of PS-, CS-, and EM-compatible with this CRN.
\end{exa}

\begin{exa}
\label{example6}
Consider the CRN in Figure \ref{figure1} (right). We may construct a reaction-to-reaction graph which is PS-, CS-, and EM-compatible with the CRN in Figure \ref{figure1} (right) by taking $E^\R = \{ (r_1,r_2),(r_2,r_4),(r_4,r_1),(r_2,r_3),(r_3,r_2)\}$. Notably, this edge set coincides the edge set for the reaction-to-reaction graph which was CS- and EM-compatible to the CRN in Figure \ref{figure1} (left).

\end{exa}

\begin{rem}
Notice that the implications in CS-compatibility are one directional only, i.e. it is not necessary that $(r_k,r_i) \in E^\R$ and $(r_k,r_j) \in E^\R$ imply $y_{s(i)} = y_{s(j)}$. For example, consider the reaction-to-reaction graph in Figure \ref{figure2} (right) as it relates to the CRN in Figure \ref{figure1} (left). We have that $(r_1,r_2) \in E^\R$ and $(r_3,r_2) \in E^\R$; however, we have $y_{s(1)} = X \not= X + Y_p = y_{s(3)}$.
\end{rem}

\subsection{Main Theory}
\label{sec:theory}

In order to state our objectives in Section \ref{sec:blp}, we need to further understand the relationship between CRNs and PS-, CS-, and/or EM-compatibility of reaction-to-reaction graphs.% We will also interpret reaction-to-reaction graphs in terms of structural translation (Definition \ref{def:translation}).

We have the following results.% regarding PS-, CS-, and EM-compatibility.

\begin{lem}
\label{lem1}
Consider a CRN $(\Sp,\C,\R)$ and a reaction-to-reaction graph $G^\R = (V^\R, E^\R)$ which is PS-compatible with $(\Sp,\C,\R)$. Then $G^\R$ is CS-compatible with $(\Sp,\C,\R)$.
\end{lem}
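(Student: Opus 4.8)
The statement is nearly immediate from the definitions, so the plan is simply to unwind the two compatibility conditions and observe that PS-compatibility gives a characterization of the edge set that forces CS-compatibility. First I would fix reactions $r_i = y_{s(i)} \to y_{p(i)}$ and $r_j = y_{s(j)} \to y_{p(j)}$ with a common source complex, i.e. $y_{s(i)} = y_{s(j)}$, and let $r_k = y_{s(k)} \to y_{p(k)}$ be any reaction with $(r_k, r_i) \in E^\R$. The goal is to show $(r_k, r_j) \in E^\R$.

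Next I would invoke PS-compatibility twice. Since $G^\R$ is PS-compatible with $(\Sp,\C,\R)$, the hypothesis $(r_k, r_i) \in E^\R$ is equivalent to $y_{p(k)} = y_{s(i)}$. Combining this with the assumption $y_{s(i)} = y_{s(j)}$ yields $y_{p(k)} = y_{s(j)}$. Applying PS-compatibility in the other direction (the "if" part of the biconditional) to the pair $(r_k, r_j)$, the equality $y_{p(k)} = y_{s(j)}$ gives $(r_k, r_j) \in E^\R$. This is exactly the implication required for CS-compatibility, so the proof is complete.

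There is essentially no obstacle here: the argument is a two-line chain of equalities of complex vectors, and the only thing to be careful about is using both directions of the PS-compatibility biconditional (one to extract the equality $y_{p(k)} = y_{s(i)}$ from the edge $(r_k,r_i)$, and one to produce the edge $(r_k,r_j)$ from the equality $y_{p(k)} = y_{s(j)}$). I would also note explicitly that the converse fails in general — having edges $(r_k,r_i)$ and $(r_k,r_j)$ does not force $y_{s(i)} = y_{s(j)}$ when PS-compatibility is dropped — which is consistent with the one-directional nature of CS-compatibility pointed out in the remark following Example \ref{example6}, but this is a side observation rather than part of the proof.
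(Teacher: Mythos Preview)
Your proposal is correct and follows essentially the same approach as the paper's own proof: assume $(r_k,r_i)\in E^\R$ and $y_{s(i)}=y_{s(j)}$, use PS-compatibility to obtain $y_{p(k)}=y_{s(i)}=y_{s(j)}$, and then use the other direction of PS-compatibility to conclude $(r_k,r_j)\in E^\R$. Your added side observation about the failure of the converse matches the paper's remark but is, as you note, not part of the proof.
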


\begin{proof}
Consider a CRN $(\Sp,\C,\R)$ and let $G^\R = (V^\R, E^\R)$ be a reaction-to-reaction graph which is PS-compatible with $(\Sp,\C,\R)$. Suppose that $(r_k,r_i) \in E^\R$ and $y_{s(i)} = y_{s(j)}$ for some $r_j \in \R$. Since $G^\R$ is PS-compatible with $(\Sp,\C,\R)$, we have $y_{p(k)} = y_{s(i)} = y_{s(j)}$. It follows from PS-compatibility that $(r_k,r_j) \in E^\R$ and therefore CS-compatibility is attained.
\end{proof}

\begin{lem}
\label{lemma1}
Consider a CRN $(\Sp,\C,\R)$ and a reaction-to-reaction graph $G^\R = (V^\R, E^\R)$  which is PS-compatible with $(\Sp,\C,\R)$. Suppose $(\Sp,\C,\R)$ has a set of elementary modes $\mathcal{E} = \{e_1, \ldots, e_p \}$ which is unitary and covers $\R$. Then $G^\R$ is EM-compatible with $(\Sp,\C,\R)$ if and only if $(\Sp,\C,\R)$ is weakly reversible and deficiency zero.
\end{lem}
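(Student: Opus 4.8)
The plan is to route both directions through the identity $\mbox{ker}(\Gamma)=\mbox{ker}(I_a)$, which is equivalent to $\delta=0$: since $\Gamma=YI_a$ we always have $\mbox{ker}(I_a)\subseteq\mbox{ker}(\Gamma)$, and for $v\in\mbox{ker}(\Gamma)$ we get $I_av\in\mbox{ker}(Y)\cap\mbox{im}(I_a)$, so the two kernels coincide exactly when $\delta=\mbox{dim}(\mbox{ker}(Y)\cap\mbox{im}(I_a))=0$. Two graph-theoretic observations carry the weight. (a) If $G^\R$ is PS-compatible and $C:\,r_{i_1}\to\cdots\to r_{i_k}\to r_{i_1}$ is a minimal directed cycle of $G^\R$ (necessarily simple, since a repeated vertex would give a shorter cycle), then $y_{p(i_t)}=y_{s(i_{t+1})}$ for every $t$, so applying $I_a$ to the $0/1$ indicator vector $\chi_C$ telescopes to $\mathbf{0}$; hence $\chi_C\in\mbox{ker}(I_a)\cap\mathbb{R}_{\geq 0}^r\subseteq\mbox{ker}(\Gamma)\cap\mathbb{R}_{\geq 0}^r=\mbox{cone}(\mathcal{E})$. (b) Conversely, if $0\neq v\in\mbox{ker}(I_a)\cap\mathbb{R}_{\geq 0}^r$, then in the subgraph of the reaction graph with edge set $\mbox{supp}(v)$ every incident complex has strictly positive in-flow and out-flow, hence in- and out-degree at least one, so this subgraph contains a simple directed cycle whose reaction set is, by PS-compatibility, the vertex set of a directed cycle of $G^\R$ contained in $\mbox{supp}(v)$. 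I will also use the elementary fact that the support of an extremal ray of a pointed cone is minimal, under inclusion, among supports of nonzero cone elements.

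For ($\Leftarrow$), assume weak reversibility and $\delta=0$, so $\mbox{cone}(\mathcal{E})=\mbox{ker}(I_a)\cap\mathbb{R}_{\geq 0}^r$. Given a minimal directed cycle $C$ of $G^\R$, observation (a) gives $\chi_C=\sum_ic_ie_i$ with $c_i\geq 0$; for each $i$ with $c_i>0$ one has $\mbox{supp}(e_i)\subseteq\mbox{supp}(C)$ and $e_i\in\mbox{ker}(I_a)\cap\mathbb{R}_{\geq 0}^r$, so (b) places a directed cycle of $G^\R$ inside $\mbox{supp}(e_i)$, and minimality of $C$ forces $\mbox{supp}(e_i)=\mbox{supp}(C)$; unitarity of $\mathcal{E}$ then gives $e_i=\chi_C$, so $C$ is an elementary flux mode. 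Conversely, given $e_i\in\mathcal{E}$, observation (b) applied to $e_i\in\mbox{ker}(I_a)\cap\mathbb{R}_{\geq 0}^r$ produces a $G^\R$-cycle $C'$ with $\mbox{supp}(C')\subseteq\mbox{supp}(e_i)$; since $\chi_{C'}$ is a nonzero element of $\mbox{cone}(\mathcal{E})$ and $e_i$ is extremal, support-minimality forces $\mbox{supp}(C')=\mbox{supp}(e_i)$, i.e.\ $e_i=\chi_{C'}$ is itself a directed cycle of $G^\R$, which is minimal because a strictly smaller $G^\R$-cycle would support a nonzero cone element with support properly inside that of the extremal ray $e_i$. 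Hence the minimal directed cycles of $G^\R$ are exactly the elementary flux modes, i.e.\ $G^\R$ is EM-compatible. (Only $\delta=0$ is used in this direction; weak reversibility is not needed.)

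For ($\Rightarrow$), assume $G^\R$ is EM-compatible, so each $e_i\in\mathcal{E}$ equals the indicator of a minimal directed cycle of $G^\R$; observation (a) then gives $e_i\in\mbox{ker}(I_a)$, hence $\mathcal{E}\subseteq\mbox{ker}(I_a)$. Because $\mathcal{E}$ covers $\R$ there is $v\in\mbox{cone}(\mathcal{E})\cap\mathbb{R}_{> 0}^r$, and for any $w\in\mbox{ker}(\Gamma)$ both $v\pm\varepsilon w$ lie in $\mbox{cone}(\mathcal{E})$ for sufficiently small $\varepsilon>0$, so $\mbox{cone}(\mathcal{E})$ linearly spans $\mbox{ker}(\Gamma)$; together with $\mathcal{E}\subseteq\mbox{ker}(I_a)$ this forces $\mbox{ker}(\Gamma)\subseteq\mbox{ker}(I_a)$, hence $\delta=0$. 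Finally, with $\delta=0$ the vector $v$ is a strictly positive circulation on the reaction graph; decomposing it into simple directed cycles shows every reaction lies on a directed cycle, so every linkage class is strongly connected and the CRN is weakly reversible.

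The main obstacle I anticipate is aligning ``minimal directed cycle of $G^\R$'' with ``extremal ray of $\mbox{cone}(\mathcal{E})$'': the delicate point in ($\Leftarrow$) is showing that an elementary flux mode is a \emph{single} minimal $G^\R$-cycle rather than merely containing one, which is exactly where support-minimality of extremal rays feeds into observation (b); and one must use EM-compatibility in its two-sided form (every minimal $G^\R$-cycle is an elementary flux mode \emph{and} every elementary flux mode occurs as a minimal $G^\R$-cycle), consistent with the discussion around Example~\ref{example5}. The remaining ingredients — the telescoping identity $I_a\chi_C=\mathbf{0}$, the circulation argument of (b), and spanning $\mbox{ker}(\Gamma)$ from a single strictly positive element — are routine.
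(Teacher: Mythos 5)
Your proof is correct, and its skeleton matches the paper's: both directions are routed through the comparison of $\mbox{ker}(I_a)$ with $\mbox{ker}(\Gamma)$, the covering hypothesis is used to push an arbitrary element of $\mbox{ker}(\Gamma)$ into the nonnegative cone (your $v\pm\varepsilon w$ spanning argument is the direct form of the paper's proof-by-contradiction shift $\mathbf{w}=\mathbf{v}+\sum\lambda_ie_i$), and PS-compatibility is exploited via the telescoping identity $I_a\chi_C=\mathbf{0}$ and the circulation-decomposition argument. Where you genuinely add value is the ($\Leftarrow$) direction: the paper only verifies that every elementary flux mode becomes a cycle of $G^\R$ and then asserts EM-compatibility, whereas the definition requires the converse inclusion (every minimal $G^\R$-cycle is an elementary flux mode) as well as minimality of the cycles realizing the $e_i$; your use of support-minimality of extremal rays together with observation (b) supplies exactly these missing halves. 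You are also right to flag that the lemma is only true under the two-sided reading of EM-compatibility — under the literal one-sided definition the PS-compatible reaction-to-reaction graph of the untranslated histidine kinase network of Figure \ref{figure1} (left) would be a counterexample to ($\Rightarrow$) — and that weak reversibility in ($\Leftarrow$) is redundant given covering and $\delta=0$. No gaps.
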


\begin{proof}
Consider a CRN $(\Sp,\C,\R)$ and let $G^\R = (V^\R, E^\R)$ be a reaction-to-reaction graph which is PS-compatible with $(\Sp,\C,\R)$. Note that PS-compatibility implies CS-compatibility by Lemma \ref{lem1}. We prove the forward and backward implications separately.\\

\noindent ($\Longrightarrow$) Suppose $G^\R$ is EM-compatible with $(\Sp,\C,\R)$. Since the elementary modes of $(\Sp,\C,\R)$ cover $\R$, we have by EM-compatibility that every reaction (vertex) in $G^\R$ is a part of a cycle. It follows immediately that $(\Sp,\C,\R)$ is weakly reversible.

Now suppose that $(\Sp,\C,\R)$ is not deficiency zero. It follows that $\delta = \mbox{dim}(\mbox{ker}(Y) \cap \mbox{im} (I_a)) > 0$ so that there is a vector $\mathbf{v} \in \mathbb{R}^r$ such that $I_a \mathbf{v} \not= \mathbf{0}$ but $Y I_a \mathbf{v} = \mathbf{0}$. If $\mathbf{v} \in \mathbb{R}_{\geq 0}^r$, since $\Gamma \mathbf{v} = \mathbf{0}$, we have that $\mathbf{v} \in \mathcal{E}$, i.e. $\mathbf{v}$ is in the elementary flux cone. Since the elementary modes are unitary, we have that $\mathbf{v}$ corresponds to a summation of cycles in $(\Sp,\C,\R)$ so that $I_a \mathbf{v} = \mathbf{0}$, which is a contradiction.

Now suppose that $\mathbf{v} \not\in \mathbb{R}_{\geq 0}^r$, i.e. at least two components have opposite signs. Then, since $\mathcal{E}$ covers $\R$, we have that there are $\lambda_i \geq 0$, $i = 1, \ldots, p,$ such that $\mathbf{w} = \mathbf{v} + \sum_{i=1}^p \lambda_i e_i \in \mathbb{R}_{\geq 0}^r$. Furthermore, we have $\Gamma \mathbf{w} = \Gamma \mathbf{v} + \sum_{i=1}^p \lambda_i \Gamma e_i = \mathbf{0}$ so that $\mathbf{w} \in \mathcal{E}$. Since the elementary flux modes are unitary, it follows that $\mathbf{w}$ corresponds to a summation of cycles in $(\Sp,\C,\R)$ so that $I_a \mathbf{w} = \mathbf{0}$. Note that $I_a \mathbf{v} = I_a \mathbf{w} - \sum_{i=1}^r \lambda_i I_a e_i = \mathbf{0}$. This a contradicts our assumptions and completes the forward direction of the proof.\\

\noindent ($\Longleftarrow$) Now suppose that $(\Sp,\C,\R)$ is weakly reversible and deficiency zero. It follows from $\delta = 0$ that $\Gamma \mathbf{v} = \mathbf{0}$ implies $I_a \mathbf{v} = \mathbf{0}$, i.e. if $\mathbf{v} \in \mathcal{E}$ then $\mathbf{v}$ is a cyclic generator of the CRN. It follows from PS-compatibility that every elementary flux mode is a cycle in the reaction graph of the CRN, and therefore a cycle in $G^\R$. It follows that $G^\R$ is EM-compatible, and we are done.
\end{proof}

We now want to relate the properties of PS-, CS-, and EM-compatibility to structural translation (Definition \ref{def:translation}). We have the following result.

\begin{thm}
\label{thm:main}
Consider a CRN $(\Sp,\C,\R)$ with a set of elementary flux modes $\mathcal{E} = \{e_1, \ldots, e_p\}$ which is unitary and covers $\R$. If there is a reaction-to-reaction graph $G^\R = (V^\R, E^\R)$  which is CS- and EM-compatible to $(\Sp,\C,\R)$ then there is a CRN $(\Sp,\C',\R')$ which is PS-, CS-, and EM-compatible with $G^\R$. Furthermore, $(\Sp,\C',\R')$ is a weakly reversible, zero deficiency structural translation of $(\Sp,\C,\R)$. In particular, the translation complexes $\alpha_i \in \mathbb{R}_{\geq 0}^m$, $i=1,\ldots, r$, required to produce such a translation satisfy the following linear system, which is necessarily consistent:
\begin{equation}
\label{eq:3}
\alpha_i - \alpha_j = y_{s(j)} - y_{p(i)}, \hspace{0.25in} (r_i,r_j) \in E^\R.
\end{equation}
\end{thm}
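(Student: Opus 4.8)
The plan is to build the translated network $(\Sp,\C',\R')$ directly from the reaction-to-reaction graph $G^\R$ by exhibiting the translation complexes $\alpha_i$ explicitly, then verify that PS-, CS-, and EM-compatibility hold and apply Lemma~\ref{lemma1} to get weak reversibility and deficiency zero. The first task is to show that the linear system \eqref{eq:3} is consistent. I would organize the analysis around the connected components of $G^\R$. On each connected component $K$ of $G^\R$, fix a spanning tree, pick a root vertex $r_{i_0}$, set $\alpha_{i_0} = \mathbf{0}$ (or any fixed value), and propagate the values of $\alpha_i$ along tree edges using \eqref{eq:3}; this determines $\alpha_i$ for every $r_i$ in $K$. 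The content is then to check that the values so obtained are independent of the chosen path, i.e.\ that the ``potential'' $\alpha$ is well defined — equivalently, that the sum of the right-hand sides $y_{s(j)} - y_{p(i)}$ around any cycle of $G^\R$ is zero. This is exactly where EM-compatibility enters: every minimal directed cycle of $G^\R$ is (the support of) an elementary flux mode $e_k$, and since $\mathcal{E}$ is unitary, walking around such a cycle $r_{i_1} \to r_{i_2} \to \cdots \to r_{i_\ell} \to r_{i_1}$ and summing the telescoping-like terms $y_{s(i_{t+1})} - y_{p(i_t)}$ should collapse, because $\Gamma e_k = \mathbf{0}$ forces $\sum_t (y_{p(i_t)} - y_{s(i_t)}) = \mathbf{0}$, and the cyclic arrangement makes the two sums match up. General cycles in $G^\R$ are $\mathbb{Z}$-combinations of minimal ones, so consistency on minimal cycles suffices. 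This verification — that the cocycle condition on $G^\R$ is implied precisely by $\Gamma e_k = \mathbf{0}$ together with unitarity — is the main obstacle, and the place where one must be careful about directions of edges versus directions of reactions.

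Next I would show that the $\alpha_i$ can be taken in $\mathbb{R}_{\geq 0}^m$: the system \eqref{eq:3} only fixes differences within a connected component, so on each component we are free to add a common constant vector, and choosing it componentwise large enough (larger than $-\min_i (\alpha_i)_j$ in each coordinate $j$) makes all entries nonnegative. One should also note the $\alpha_i$ need not be integer a priori, but since the right-hand sides of \eqref{eq:3} are integer vectors, the tree construction produces integer $\alpha_i$ up to the free constant, which we may then choose integral as well — though for the theorem as stated only $\alpha_i \in \mathbb{R}_{\geq 0}^m$ is required.

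With the $\alpha_i$ in hand, define $\R'$ by translating each reaction $r_k$ by $\alpha_k$, i.e.\ $r_k' = (y_{s(k)} + \alpha_k) \to (y_{p(k)} + \alpha_k)$, and let $\C'$ be the resulting set of complexes and $\Sp' = \Sp$. As observed in Section~\ref{sec:translation}, translating does not change reaction vectors, so $\Gamma' = \Gamma$ and $(\Sp,\C',\R')$ is a structural translation of $(\Sp,\C,\R)$. Now I claim $G^\R$ is PS-compatible with $(\Sp,\C',\R')$: if $(r_i,r_j) \in E^\R$, then by \eqref{eq:3} the product complex of $r_i'$ is $y_{p(i)} + \alpha_i = y_{p(i)} + \alpha_j + (y_{s(j)} - y_{p(i)}) = y_{s(j)} + \alpha_j$, which is the source complex of $r_j'$; conversely, if the translated product of $r_i'$ equals the translated source of $r_j'$, one must argue $(r_i,r_j)\in E^\R$ — here I would invoke CS-compatibility of $G^\R$ with the original network together with the connectivity structure, or argue that two reactions with equal translated source complexes must share a vertex in $G^\R$ by how $\alpha$ was built on components. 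CS-compatibility of $G^\R$ with $(\Sp,\C',\R')$ then follows from PS-compatibility by Lemma~\ref{lem1}, and EM-compatibility of $G^\R$ with $(\Sp,\C',\R')$ holds because the minimal cycles of $G^\R$ are unchanged and $(\Sp,\C',\R')$ has the same $\Gamma$, hence the same elementary flux modes, as $(\Sp,\C,\R)$. Finally, since $G^\R$ is now PS- and EM-compatible with $(\Sp,\C',\R')$ and the elementary modes of $(\Sp,\C',\R')$ are unitary and cover $\R'$, Lemma~\ref{lemma1} yields that $(\Sp,\C',\R')$ is weakly reversible and deficiency zero, completing the proof. The subtle point to get right in this last paragraph is the converse direction of PS-compatibility (ruling out ``accidental'' complex coincidences after translation), and I expect to handle it by noting that the construction of $\alpha$ is essentially forced on each connected component of $G^\R$, so any coincidence of translated complexes already manifests as an edge in $G^\R$.
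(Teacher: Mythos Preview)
Your proposal is correct and reaches the same conclusion as the paper, but the packaging of the consistency argument is different. The paper writes \eqref{eq:3} as a block linear system $A\alpha=\mathbf{b}$, observes that the block structure of $A$ mirrors the incidence matrix of $G^\R$, and checks consistency via the Fredholm alternative: $\ker(A^T)$ is spanned by vectors supported on the minimal cycles of $G^\R$, which by EM-compatibility are elementary flux modes $e_k$, and then $\beta_k^T\mathbf{b}=-\bar\beta_k^T\sum(y_{p}-y_{s})=0$ since $\Gamma e_k=0$. Your spanning-tree/potential-function argument is the graph-theoretic dual of this: propagate $\alpha$ along a tree and verify that the sum of right-hand sides vanishes on each cycle, reducing (as you note) to directed minimal cycles via the cycle-space decomposition. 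The key computation --- that the cyclic sum of $y_{s(j)}-y_{p(i)}$ collapses to $-\Gamma e_k=0$ --- is identical in both. Your version is more constructive (it actually exhibits the $\alpha_i$, and indeed the paper's algorithmic Step~3 in Section~\ref{sec:blp} uses exactly your tree-propagation recipe), and you explicitly address the nonnegativity $\alpha_i\in\mathbb{R}_{\geq 0}^m$ by shifting each connected component, which the paper's formal proof omits and only handles later in the algorithm. You also correctly flag the converse direction of PS-compatibility as delicate; the paper simply asserts PS-, CS-, and EM-compatibility ``by construction'' without discussing that converse, so on that point you are being more careful than the paper rather than less.
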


\begin{proof}
Consider a reaction-to-reaction graph $G^\R = (V^\R, E^\R)$ which is CS- and EM-compatible with $(\Sp,\C,\R)$. We show that it is possible to construct a CRN $(\Sp,\C',\R')$ which is EM-, CS-, and PS-compatible to $G^\R$ by setting up and solving the corresponding linear system \eqref{eq:3} in the translation complexes $\Lambda = \{ \alpha_1, \ldots, \alpha_r \}$.

In order for $(\Sp, \C', \R')$ to be PS-compatible to $G^\R$, we require that
\begin{equation}
\label{eq:1}
y'_{p(i)} = y'_{s(j)}, \hspace{0.25in} \mbox{for } (r_i,r_j) \in E^\R.
\end{equation}
Note that we can satisfy this set of equations if there is a set of translation complexes $\Lambda = \{ \alpha_1, \ldots, \alpha_r \}$ where $\alpha_i \in \mathbb{R}^m$, $i =1, \ldots, r$, such that $y'_{p(i)} = y_{p(i)} + \alpha_i$ and $y'_{s(j)} = y_{s(j)} + \alpha_j$, i.e. each complex in $\C'$ results from translating a complex in $\C$ by the corresponding translation complex $\alpha_i \in \Lambda$. From \eqref{eq:1}, this gives the system
\begin{equation}
\label{eq:2}
y_{p(i)} + \alpha_i  = y_{s(j)} + \alpha_j, \hspace{0.25in} (r_i,r_j) \in E^\R
\end{equation}
which can be rearranged to give \eqref{eq:3} in the unknown vectors $\alpha_i \in \mathbb{R}^{m}$, $i = 1, \ldots, r$. We now show that, since $G^\R$ is CS- and EM-compatible with $(\Sp,\C,\R)$, that \eqref{eq:3} is necessarily consistent.

For ease of notation, we let $q = |E^\R|$ and suppose the edges $(r_i,r_j) \in E^\R$ are ordered $1, \ldots, q$, i.e. $E^\R = \{ v_1, \ldots, v_q \} \subseteq \R \times \R$ where $v_k = (r_i, r_j)$. We can then write \eqref{eq:3} as the linear system $A \alpha = \mathbf{b}$ where $\alpha = ( \alpha_1, \ldots, \alpha_r) \in \mathbb{R}^{mr}$ is a vector of unknowns, $\mathbf{b} = (b_1, \ldots, b_q) \in \mathbb{R}^{mq}$ has entries $b_k = (y_{s(j)} - y_{p(i)}) \in \mathbb{R}^m$ if $v_k = (r_i,r_j) \in E^\R$, and $A \in \mathbb{R}^{mq \times mr}$ has the block structure
\begin{equation}
\label{eq:4}
A = \left[ \begin{array}{cccc} A_{11} & A_{12} & \cdots & A_{1r} \\ A_{21} & A_{22} & \cdots & A_{2r} \\ \vdots & \vdots & \ddots & \vdots \\ A_{q1} & A_{q2} & \cdots & A_{qr} \end{array} \right]
\end{equation}
where, given $v_k = (r_i, r_j) \in E^\R$, we set $A_{ki} = -I^{m \times m}$, $A_{kj} = I^{m \times m}$, and $A_{kl} = \mathbf{0}^{m \times m}$ for all $l \not= i$ or $l \not= j$.

In order to show the linear system $A \alpha = \mathbf{b}$ is consistent, it is sufficient to show that $\mathbf{c} \in \mbox{ker}(A^T)$ implies that $\mathbf{c}^T \mathbf{b} = 0$. To characterize $\mbox{ker}(A^T)$, notice that the block structure of $A^T$ corresponds to the incidence matrix of $G^\R$ (interpreting the identity blocks $I^{m \times m}$ as $1$ and the $0^{m \times m}$ blocks as $0$). It follows that $\mbox{ker}(A^T)$ has support on the minimal cycles of $G^\R$ which correspond by EM-compatibility to the elementary modes $\mathcal{E} = \{ e_1, \ldots, e_p \}$ of $(\Sp,\C,\R)$. %Since $\mathfrak{R}$ is EM-consistent with $(\Sp,\C,\R)$ and the set of elementary modes $\mathcal{E}$ covers $\R$, we have every cycle in $\mathfrak{R}$ corresponds to a basis element of $\mbox{ker}(A^T)$ in the following way.
We can extend this to the block structure of $A^T$ in the following way: to each elementary mode $e_k \in \mathcal{E}$, we introduce an arbitrary vector $\bar{\beta}_k \in \mathbb{R}^r$ and define $\beta = (\beta_1, \ldots, \beta_q) \in \mathbb{R}^{mq}$ such that, if $\{ v_{\mu(1)},\ldots, v_{\mu(l)}\}$ the minimal cycle in $G^\R$ corresponding to the elementary mode $e_k$, we have $\beta_{\mu(i)} = \bar{\beta}_k$ for all $i = 1, \ldots, l$. We have that $\{ \beta_1, \ldots, \beta_k \}\subseteq \mathbb{R}_{mq}$ forms a basis of $\mbox{ker}(A^T)$. %It follows that, if $C = \{ e_{\mu(1)}, \ldots, e_{\mu(l)} \} \subseteq E$ is a minimal cycle in $\mathfrak{R}$, for an arbitrary vector $\bar{\mathbf{c}} \in \mathbb{R}^{r}$, we have $\mathbf{c} = (\mathbf{c}_1, \ldots, \mathbf{c}_q)$ where $\mathbf{c}_i \in \mathbb{R}^{r}$ is in $\mbox{ker}(A^T)$ if $e_i \in C$ implies $\mathbf{c}_i = \bar{\mathbf{c}}$. 
Furthermore, it follows that
\[\beta_k^T \mathbf{b} = \bar{\beta}_k^T \sum_{i=1}^l (y_{s(\mu(i))} - y_{p(\mu(j))}) = -\bar{\beta}_k^T \sum_{i=1}^l (y_{p(\mu(i))} - y_{s(\mu(i))}) = 0\]
since $e_k = \{r_{\mu(1)} , \ldots, r_{\mu(l)}\}$ is an elementary flux mode of $(\Sp,\C,\R)$.

It follows that the system $A \alpha = \mathbf{b}$ is consistent so that we may solve the system \eqref{eq:3} for the translation complexes $\Lambda = \{ \alpha_1, \ldots, \alpha_r\}$. By construction, the resulting network $(\Sp,\C',\R')$ is PS-, CS-, and EM-compatible with $G^\R$. Furthermore we have $\Gamma = \Gamma'$ so that $(\Sp,\C,\R)$ and $(\Sp,\C',\R')$ are structural translations of one another, and $(\Sp,\C',\R')$ is weakly reversible and deficiency zero by Lemma \ref{lemma1}, and we are done.
%In order for a CRN $(\Sp,\C',\R')$ to be PS-compatible with $\mathfrak{R}$, we must have...
\end{proof}

\begin{exa}
Reconsider the reaction-to-reaction graph in Figure \ref{figure2} (right). The reaction-to-reaction graph in Figure \ref{figure1} (right) is both CS- and EM-compatible with the CRN in Figure \ref{figure1} (left). It follows from Theorem \ref{thm:main} that there is a CRN $(\Sp,\C',\R')$ which is PS-, CS-, and EM-compatible with the reaction-to-reaction graph in Figure \ref{figure2}(right). Furthermore, this CRN is a weakly reversible, deficiency zero structural translation of the original CRN. We can quickly verify that these properties are satisfied by the CRN in Figure \ref{figure1} (right).
\end{exa}

\begin{rem}
Note that having a structural translation with a deficiency of zero corresponds to a stoichiometric deficiency of zero in the corresponding generalized chemical reaction network \cite{MR2012,MR2014}. It is still possible, however, that the kinetic-order deficiency is nonzero (see Appendix \ref{app:mapk}).
\end{rem}

\subsection{Computing Structural Translations}
\label{sec:blp}

Theorem \ref{thm:main} and the networks in Figure \ref{figure1} suggests a process by which to construct structural translations. We perform the following steps:
\begin{enumerate}
\item
From the given CRN $(\Sp,\C,\R$), we compute the set of elementary flux modes $\mathcal{E} = \{e_1, \ldots, e_p \}$ and the set of reactions with shared source complexes, i.e. $\mathcal{F} = \{ f_1, \ldots, f_q\}$ where $r_i, r_j \in f_k$, $i \not= j$, for some $k$ if $y_{s(i)} = y_{s(j)}$.
\item
From the sets $\mathcal{E}$ and $\mathcal{F}$, we determine a reaction-to-reaction graph $G^\R = (V^\R, E^\R)$ which is CS- and EM-compatible with $(\Sp,\C,\R)$.
\item
From this reaction-to-reaction graph $G^\R$, we construct a CRN $(\Sp,\C',\R')$ which is PS-, CS-, and EM-compatible with $G^\R$ by solving \eqref{eq:3}.
\end{enumerate}

\noindent Note that, if successful, this algorithm produces a weakly reversible, deficiency zero structural translation of $(\Sp,\C,\R)$ by Theorem \ref{thm:main}. In what follows we describe the approaches taken to these three steps.

\subsubsection*{Step 1: Computing Elementary Flux Modes}

Consider a CRN $(\Sp,\C,\R)$. To determine the set of elementary flux modes $\mathcal{E}$ of $(\Sp,\C,\R)$, we use the {\tt crnpy} Python package developed by Elisa Tonello \cite{Tonello2016crnpy}. In accordance with Theorem \ref{thm:main}, we do not consider the network if the set $\mathcal{E}$ is not unitary (i.e. some elementary flux modes $e_i \in \mathcal{E}$ with entries other than zeros and ones) or does not cover $\R$ (i.e. there is a reaction which does not have support on any elementary mode $e_i \in \mathcal{E}$).% Research on efficient computation of elementary flux modes $\mathcal{E}$ is challenging and still ongoing due to significant application in metabolic engineering, among other areas \cite{Orth2010}. 

We also collect sets of reactions with shared source complexes into a set $\mathcal{F} = \{ f_1, \ldots, f_q \}$ where $q$ is the number of source complexes which are the source for at least two reactions. This set can be constructed by direct analysis of the incidence matrix $I_a$ of the CRN.

Throughout this section, we consider elementary flux modes according to their supports, i.e. $e_i \subseteq \R$.

% Theorems ... imply a framework by which to determine a network translation which is weakly reversible and deficiency zero. In particular, we seek to perform the following steps:
% \begin{enumerate}
% \item
% From a given CRN, determine the elementary modes and reactions which have a common source complex.
% \item
% Find a reaction-to-reaction graph where each elementary modes corresponds to a cycle and any two reactions which have a common source complex connect to the same vertex.
% \item
% From the reaction-to-reaction graph constructed in step 2, construct the corresponding network translation according to Theorem ...
% \end{enumerate}

\subsubsection*{Step 2: Computing the Reaction-to-Reaction Graph}

Recall that a \emph{binary linear programming} (BLP) problem can be stated in the general form
\begin{equation}
\label{blp}
\begin{array}{rl}
\mbox{maximize} &  \mathbf{c}^T \mathbf{x}\\
\mbox{subject to} & A \mathbf{x} \leq \mathbf{b}
\end{array}
\end{equation}
where $A \in \mathbb{R}^{n \times m}$, $
\mathbf{b} \in \mathbb{R}^n$, and $\mathbf{c} \in \mathbb{R}^m$ are matrices and vectors of parameters, and $\mathbf{x} \in \{0, 1\}^m$ is a vector of binary decision variables. % Linear programs of the form \eqref{blp} are solvable in polynomial time by Lenstra's algorithm \cite{lenstra1983}.
%We now introduce a BLP framework for construction network translations from a CRN $(\Sp,\C,\R)$. The inputs to our process are two sets of sets generated from the original CRN. The first set we denote
%\[E = \{ E_1, \ldots, E_p \}\]
%where $E_i = \mbox{supp}(e_i)$ for the $i$th elementary mode of the CRN. The second set we denote
%\[S = \{ S_1, \ldots, S_m \}\]
%where $S_i$ consists of the sets of two of more reactions which have $C_i$ as its source. Note that if only one reaction has source $C_i$ then $S_i = \emptyset$ and may be excluded from $S$.

We formulate the problem of determining a reaction-to-reaction graph $G^\R = (V^\R, E^\R)$ which is CS- and EM-compatible with $(\Sp,\C,\R)$ as a BLP problem. We introduce binary decision variables $x_{ij} \in \{ 0, 1 \}$, $i, j = 1, \ldots, r$, $i \not= j$, with the following logical requirement:
\[x_{ij} = 1 \; \Longleftrightarrow \; (r_i, r_j) \in E^\R\]
where $E$ is the edge set of our reaction-to-reaction graph $G^\R = (V^\R, E^\R)$. We now seek to set up constraints sufficient to guarantee the reaction-to-reaction graph $G^\R$ is CS- and EM-compatible with $(\Sp,\C,\R)$. For this purpose, it is sufficient to consider the sets $\mathcal{E}$ and $\mathcal{F}$ determined in Step 1.\\

\noindent \emph{Elimination of unnecessary edges:} It is often apparent from the structure of $\mathcal{E}$ and $\mathcal{F}$ that the reactions may be partitioned into noninteracting sets of reactions. We use the following rules to establish these partitions:
\begin{enumerate}
\item
$e_i \equiv e_j$ if $e_i \cap e_j \not= \emptyset$, and
\item
$e_i \equiv e_j$ if there are $r_k \in e_i$ and $r_l \in e_j$ such that $r_k, r_l \in f_p$ for some $f_p \in \mathcal{F}$.
\end{enumerate}
That is, two elementary modes are connected if they share a reaction (condition 1) or possess reactions which have a common source complex (condition 2). To define the desired partitions of the reactions, we take the transitive closure of the $\equiv$ operation defined above, and then the union of the reactions in each equivalence class of elementary flux modes. This gives a set
\[\mathcal{G} = \{ g_1, \ldots, g_w \}, \; \mbox{ where } \; g_k = \bigcup_{e_i \equiv e_j} e_i.\]
We then impose the following partition rule:
\begin{equation}
\label{const:connected}\tag{\textbf{Par}}
\begin{aligned}
x_{ij}& = 0, \hspace{0.5in} & \mbox{if } r_i \in g_k \mbox{ and } r_j \not\in g_k \mbox{ for any } k =1, \ldots, w.
\end{aligned}
\end{equation}\hspace{0.1in}

\noindent \emph{CS-compatibility:} To guarantee $G^\R = (V^\R, E^\R)$ is CS-compatible with $(\Sp,\C,\R)$, we impose that, if $r_i, r_j \in f_l$ for some $f_l \in \mathcal{F}$, then
\begin{equation}
\label{const:source}\tag{\textbf{CS}}
 \begin{aligned}
x_{ki} - x_{kj} & = 0, \hspace{0.5in} & k = 1, \ldots, r.
\end{aligned}
\end{equation}
The constraint set \eqref{const:source} guarantees that either $(r_k, r_i) \in E^\R$ and $(r_k, r_j) \in E^\R$, or $(r_k, r_i) \not\in E^\R$ and $(r_k, r_j) \not\in E^\R$.\\

\noindent \emph{EM-compatibility:} Consider an elementary flux mode $e_k \in \mathcal{E}$ and define $l(k) = |e_k|$. We introduce the following constraint set:
\begin{equation}
\label{const:cycle}\tag{\textbf{EM1}}
\left\{ \begin{aligned}
\sum_{r_i, r_j \in e_k} x_{ij} & = l(k) & \\
\sum_{r_i \in e_k} x_{ij} & = 1, \hspace{0.5in} & r_j \in e_k \\
\sum_{r_j \in e_k} x_{ij} & = 1, \hspace{0.5in} & r_i \in e_k.
\end{aligned} \right.
\end{equation}
The first constraint set in \eqref{const:cycle} guarantees that the number of edges on a component corresponding to the support of an elementary flux mode contains exactly the number of edges contained in the elementary flux mode. The second constraint set in \eqref{const:cycle} guarantees that each vertex of the component has exactly one outgoing edge, while the third constraint set guarantees that each such vertex has exactly one incoming edge.

The constraint set \eqref{const:cycle} guarantees that every vertex (reaction) with support on a given elementary mode is a part of exactly one cycle on the support of that elementary mode. It does not, however, guarantee that these cycles are maximal with respect to the support of the elementary mode. For example, an elementary mode consisting of $6$ reactions may be split into a $2$-cycle and a $4$-cycles, or two $3$-cycles. We furthermore impose that elementary flux modes may not be decomposed into subcycles. We guarantee this by imposing that, for every elementary mode $e_k \in \mathcal{E}$ with $l(k) = |e_k| \geq 4$, every combination $e'_k = \{ r_{\mu(1)}, \ldots, r_{\mu(l(k'))} \} \subset e_k$ with $2 \leq l(k') = |e'_k| \leq \lfloor \frac{k}{2} \rfloor$ satisfies:
\begin{equation}
\label{const:subcycle}\tag{\textbf{EM2}}
\left\{ \begin{aligned}
\sum_{r_i, r_j \in e'_k} x_{ij} \leq l(k') - 1
%\sum_{i=1}^{len'_k} \sum_{j=1}^{len'_k} x_{\nu(i)\nu(j)} & \leq k'-1.
\end{aligned} \right.
\end{equation}
Since a cycle on a component of size $l(k')$ is required to have $l(k')$ edges, the constraint set \eqref{const:subcycle} guarantees that no subcycles exist on the support of an elementary flux mode. Notice that we do not need to apply this condition for components $l(k') > \lfloor \frac{l(k)}{2} \rfloor$ since a subcycle of such size necessitates a subcycle of size $l(k') \leq \lfloor \frac{l(k)}{2} \rfloor$ by \eqref{const:cycle}.\\

\noindent \emph{Objective function:} %The constraint sets \eqref{const:cycle}, \eqref{const:subcycle}, and \eqref{const:source} determine the feasible region for a BLP corresponding to constructing reaction-to-reaction graphs which are EM- and CS-compatible with $(\Sp,\C,\R)$.
%Notice that if $r_i, r_j \in \R$ are not common to any elementary flux modes, then an edge $(r_i, r_j) \in E$ may be added to the reaction-to-reaction graph $\mathfrak{R} = (V,E)$ without changing the capacity for CS-compatibility since CS-compatibility is one direction only. In general, we wish to have as few of these edges as necessary as fewer edges will imply fewer overlaps in the structural translation. 
We impose the following objective function
\begin{equation}
\label{objectivefunction}\tag{\textbf{Obj}}
\mbox{minimize} \; \; \mathop{\sum_{i,j=1}^r}_{i \not= j} x_{ij}.
\end{equation}
That is, we minimize the number of edges in $(r_i,r_j) \in E^\R$. This prohibits the procedure from adding unnecessary edges $(r_i,r_j) \in E^\R$. We produce a reaction-to-reaction graph by optimizing \eqref{objectivefunction} over the constraint sets \eqref{const:connected}, \eqref{const:cycle}, \eqref{const:subcycle}, and \eqref{const:source}. 

\begin{rem}
Although \eqref{const:subcycle} guarantees that there are no subcycles on a given elementary flux mode, it is possible that the optimization procedure will create cycles which do not correspond to minimal elementary flux modes. The resulting reaction-to-reaction graph will then fail to be EM-compatible with $(\Sp,\C,\R)$. Rather than implement further constraints like \eqref{const:subcycle} to eliminate this possibility, we note that such a network will fail to have a consistent system \eqref{eq:3}. Consistency of a linear system $A \mathbf{x} = \mathbf{b}$ is simple to check computationally by checking $\mbox{rank}(A) = \mbox{rank}(B)$ where $B$ is the augmented matrix $B = [A \mid \mathbf{b}]$. If $\mbox{rank}(A) \not= \mbox{rank}(B)$, we do not proceed to Step 3.
\end{rem} %The following result guarantees that we do not omit networks which have admissible reaction-to-reaction graphs by throwing away such networks.

%\begin{lem}
%\label{lemma1}
%The system \eqref{eq:3} is consistent if and only if the solution to \eqref{objectivefunction} over the constraint sets \eqref{const:cycle}, \eqref{const:subcycle}, and \eqref{const:source} has a solution.
%\end{lem}

%\begin{proof}
%{\color{red} Blah, may omit}
%\end{proof}

\subsubsection*{Step 3: Construct structural translation $(\Sp,\C',\R')$}

To construct a structural translation $(\Sp,\C',\R')$ from the reaction-to-reaction graph produced in Step 2, we need to solve the linear system \eqref{eq:3}. As a preprocessing step, we check whether \eqref{eq:3} is consistent by computing the rank of the associated matrices. If the system is not consistent, the network does not admit a structural translation by Lemma \ref{lemma1}. If the system is consistent, we may construct a structural translation $(\Sp,\C',\R')$ by solving \eqref{eq:3} for the set of translation complexes $\Lambda = \{ \alpha_1, \ldots, \alpha_r\}$.

Rather than solve \eqref{eq:3} directly we use the observation that, for a known $\alpha_i$, we have
\[\alpha_j  = y_{p(i)} - y_{s(j)} + \alpha_i\]
for every $r_j \in \R$ such that $(r_i, r_j) \in E^\R$. Consequently, we may use the following algorithm to solve \eqref{eq:3}:
\begin{enumerate}
\item
Initialize the sets $\mathcal{P} = \R$, $\mathcal{P}' = \emptyset$, and $\mathcal{P}'' = \emptyset$.
\item
Select an arbitrary $r_i \in \mathcal{P}$ and then:
\begin{enumerate}
\item
set $\alpha_i = \mathbf{0}$ and
\item
set $\mathcal{P}' = \{ r_i\}$ and $\mathcal{P} = \mathcal{P} \setminus  \{r_i \}$.
\end{enumerate}
\item
For all $r_j \in \mathcal{P}$ such that $r_i \in \mathcal{P}'$ and $(r_i, r_j) \in E^\R$, do the following:
\begin{enumerate}
\item
set $\alpha_j = y_{p(i)}-y_{s(j)}+\alpha_i$ and
\item
set $\mathcal{P}'' = (\mathcal{P}'' \cup \{r_j\}) \setminus \mathcal{P}'$.
\end{enumerate}
\item
If $\mathcal{P}'' \not= \emptyset$, then:
\begin{enumerate}
\item
set $\mathcal{P}' = \mathcal{P}''$, $\mathcal{P} = \mathcal{P} \setminus \mathcal{P}'$, and $\mathcal{P}'' = \emptyset$ and
\item
repeat from step 3.
\end{enumerate}
\item
If $\mathcal{P}'' = \emptyset$ and $\mathcal{P} \not= \emptyset$ then repeat from step 2.
\item
If $\mathcal{P}'' = \emptyset$ and $\mathcal{P} = \emptyset$, we are done.
\end{enumerate}
This algorithm solves for each translation complex in \eqref{eq:3} successively and can in general be solved more efficiently than the corresponding system in matrix form. We subsequently adjust translation complexes so that the resulting complexes are nonnegative by adding nonnegative complexes to entire linkage classes where needed.

\section{Examples}
\label{sec:examples}

In this section, we apply the algorithm presented in Section \ref{sec:blp} to 508 curated models from the European Bioinformatics' BioModels Database and summarize the output. We also expand upon two models the algorithm determined to have a weakly reversible, zero deficiency structural translation: a zigzag model of plant-pathogen interactions \cite{Pritchard2014,Jones2006}, and a MAPK cascade model \cite{M-H-K}. In Appendix \ref{app:a}, we outline how the outcome of the algorithm in Section \ref{sec:blp} can be utilized to construct steady state parametrizations according to \cite{J-M-P} and establish mono or multistationarity within stoichiometric compatibility classes according to \cite{C-F-M-W2016}.

\subsection{BioModels Database}

We implemented the algorithm outlined in Section \ref{sec:blp} in Python and tested it on 508 curated networks from the European Bioinformatics Institute's Biomodels database \cite{BioModels}. We imposed a twenty minute timeout per model. The algorithm found 176 models which permitted a weakly reversible, deficiency zero structural translation to be computed. Of those models, 34 were not originally weakly reversible, deficiency zero networks.

Of the models for which the program did not succeed in finding a weakly reversible, deficiency zero structural translation, 239 failed because the network had an elementary flux mode set $\mathcal{E}$ which either was not unitary or did not cover $\R$, 60 failed because a EM- and CS-compatible reaction-to-reaction graph could not be constructed, and 27 failed due to computational time out. The mean size of the networks which failed to compute due to computational timeout was 387 reactions, and the median was 144 reactions.

%We now consider some networks which are identified by the algorithm as having weakly reversible, deficiency zero structural translations. In Appendix \ref{app:parametrization}, we show how structural translations can be extended to produce steady state parametrizations according to the theory presented in \cite{J-M-P}, and also establish multistationarity via the theory presented in \cite{C-F-M-W2016}.
\begin{figure}[t]
\begin{center}
%\includegraphics[width=14cm]{figure_5.pdf}
%\end{center}
\begin{subfigure}[b]{0.48\textwidth}
\includegraphics[width=\textwidth]{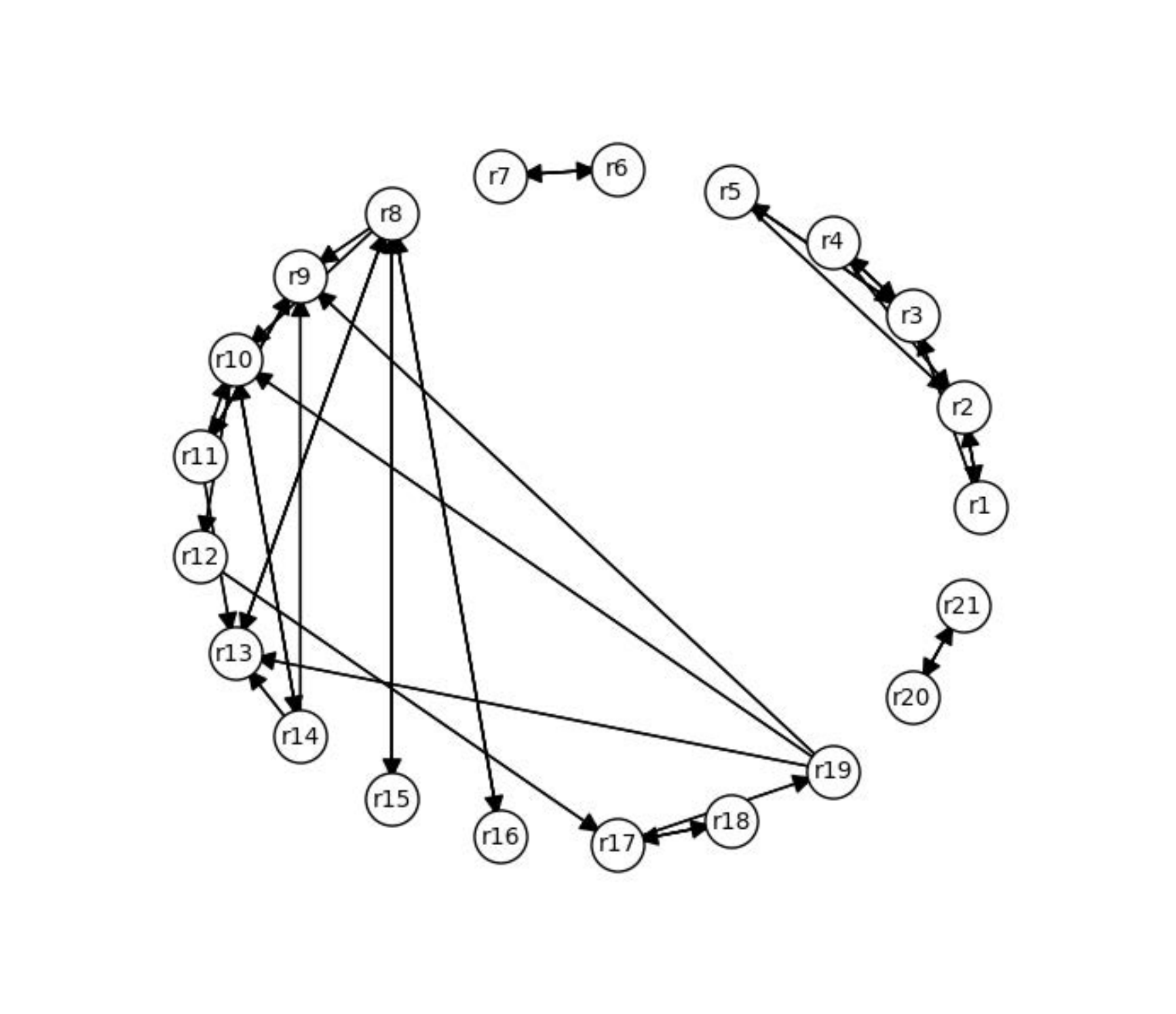}
        \caption{Reaction-to-reaction graph corresponding to \eqref{zigzag} and \eqref{zigzagtranslated}.}
%        \label{fig:gull}
    \end{subfigure}
    ~ %add desired spacing between images, e. g. ~, \quad, \qquad, \hfill etc. 
      %(or a blank line to force the subfigure onto a new line)
    \begin{subfigure}[b]{0.42\textwidth}
        \includegraphics[width=\textwidth]{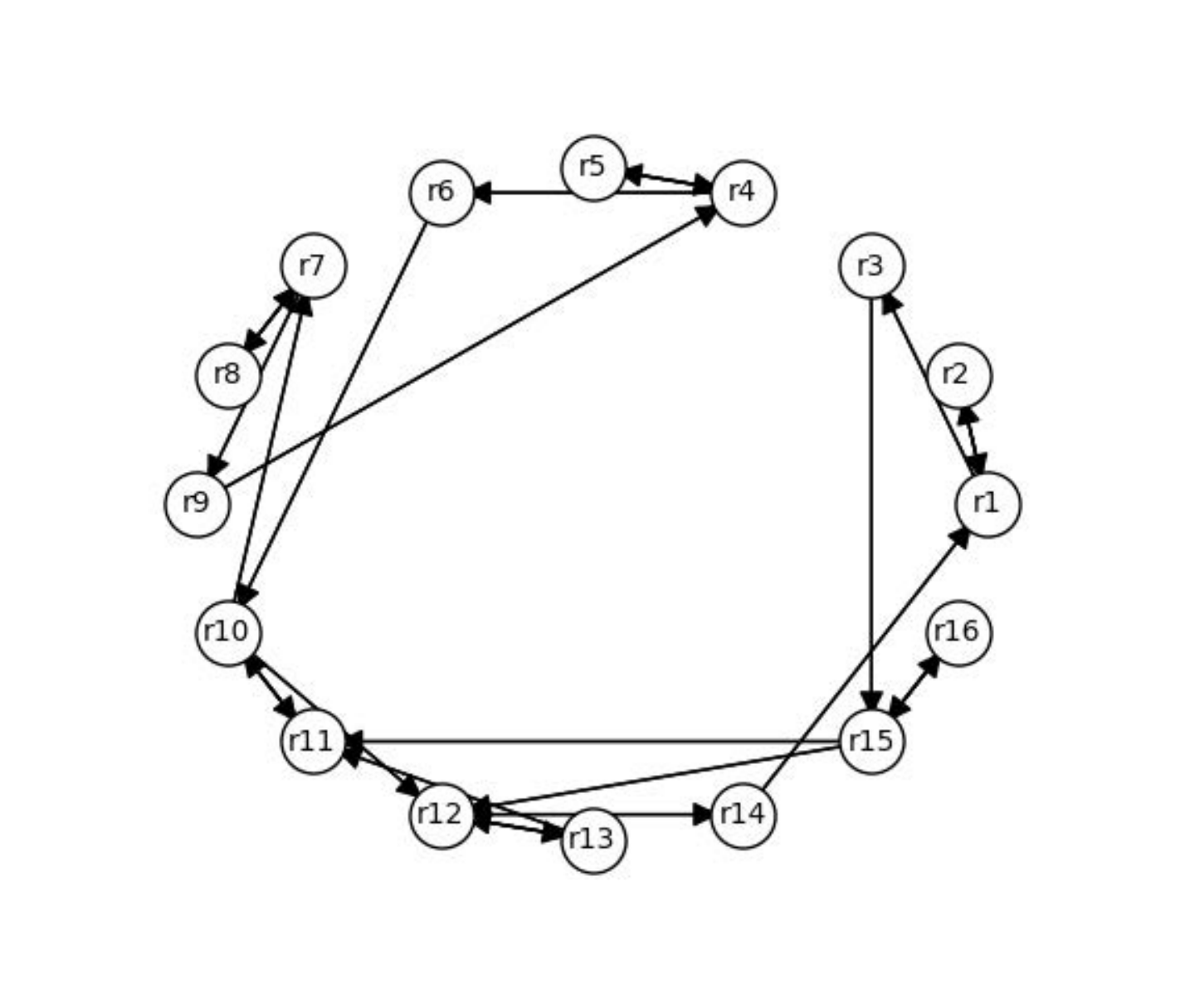}
        \caption{Reaction-to-reaction graph corresponding to \eqref{mapk} and \eqref{mapk-translated}.}
%        \label{fig:tiger}
    \end{subfigure}
    \end{center}
\caption{Two reactions-to-reaction graphs determined by the BLP outlined in Section \ref{sec:blp}. In (a), the reaction-to-reaction graph is CS- and EM-compatible with \eqref{zigzag} and PS-, CS-, and EM-compatible with \eqref{zigzagtranslated}. In (b), the reaction-to-reaction graph is CS- and EM-compatible with \eqref{mapk} and PS-, CS-, and EM-compatible with \eqref{mapk-translated}.}
\label{zigzagr2r}
\end{figure}

\subsection{Example: Zigzag Model}
\label{sec:zigzag}

Consider the following network of the zigzag model of plant-pathogen interactions \cite{Pritchard2014,Jones2006} which corresponds to network {\tt biomd0000000563} in the BioModels database \cite{BioModels}:
\small
% \begin{equation}
% \label{zigzag}
% \begin{tikzcd}
% PAMP + PRR \arrow[r,  yshift=+0.5ex, "r_1"] & PRR^* \arrow[l,yshift=-0.5ex,"r_2"] & PRR^* \arrow[r, "r_{14}"] & Callose + PRR^* & E \arrow[r, "r_5"] & \emptyset \\[-0.15in]
% E_{int} + R \arrow[r,  yshift=+0.5ex,"r_3"] & R^* \arrow[l, yshift=-0.5ex,"r_4"] &  &  & PAMP \arrow[r, "r_6"] & \emptyset \\[-0.15in]
% Path_{bulk} \arrow[r, "r_7"] & Path_{bulk} + Path & Path + R^* \arrow[r, "r_{12}"] & R & Path \arrow[r, "r_8"] & \emptyset \\[-0.15in]
% Path \arrow[r, "r_9"] & Path + PAMP & Callose + Path \arrow[r, "r_{16}"] & Callose& E_{int} \arrow[r, "r_{11}"] & \emptyset \\[-0.15in]
% Path \arrow[r, "r_{10}"] & Path + E & Callose + E_{int} \arrow[r, "r_{17}"] & E_{int} & Callose \arrow[r, "r_{15}"] & \emptyset \\[-0.15in]
% E+ A \arrow[r, yshift=+0.5ex, "r_{18}"] & EA \arrow[l,yshift=-5.5ex,"r_{19}"] & EA \arrow[r, "r_{20}"] & E_{int} + A & E + Callose \arrow[r, yshift=0.5ex, "r_{21}"] & EC \arrow[l, yshift=-0.5ex, "r_{22}"] \\[-0.15in]
% \end{tikzcd}
% \end{equation}
\begin{equation}
\label{zigzag}
\begin{tikzcd}
X_1 + X_2 \arrow[r,  yshift=+0.5ex, "r_1"] & X_3 \arrow[l,yshift=-0.5ex,"r_2"] & X_9 \arrow[r, "r_9"] & X_1 + X_9 & X_7 + X_9 \arrow[r, "r_{15}"] & X_7 \\[-0.15in]
X_3 \arrow[r, "r_3"] & X_3 + X_4 & X_9 \arrow[r, "r_{10}"] & X_{10} + X_9 & X_4 + X_9 \arrow[r, "r_{16}"] & X_4 \\[-0.15in]
X_4 \arrow[r, "r_4"] & \emptyset & X_{1} \arrow[r, "r_{11}"] & \emptyset & X_{10} + X_{11} \arrow[r, yshift=0.5ex, "r_{17}"] & X_{12} \arrow[l, yshift=-0.5ex, "r_{18}"] \\[-0.15in]
X_4 + X_5 \arrow[r, "r_5"] & X_5 & X_{5} \arrow[r, "r_{12}"] & \emptyset & X_{12} \arrow[r, "r_{19}"] & X_5 + X_{11} \\[-0.15in]
X_5 + X_6 \arrow[r, yshift=0.5ex,"r_6"] & X_7 \arrow[l, yshift=-0.5ex,"r_7"] & X_{9} \arrow[r, "r_{13}"] & \emptyset & X_{4} + X_{11} \arrow[r, yshift=0.5ex,"r_{20}"] & X_{13} \arrow[l,yshift=-0.5ex,"r_{21}"] \\[-0.15in]
X_8 \arrow[r, "r_8"] & X_8 + X_9 & X_{10} \arrow[r, "r_{14}"] & \emptyset & &
\end{tikzcd}
\end{equation}
\normalsize
where $X_1 = \mbox{PAMP}$, $X_2 = \mbox{PRR}$, $X_3 = \mbox{PRR}^*$, $X_4 = \mbox{Callose}$, $X_5 = \mbox{E}_{\mbox{int}}$, $X_6 = \mbox{R}$, $X_7 = \mbox{R}^*$, $X_8 = \mbox{Pathogen}_{\mbox{bulk}}$, $X_9 = \mbox{Pathogen}$, $X_{10} = \mbox{E}$, $X_{11} = \mbox{F}$, $X_{12} = \mbox{EF}$, and $X_{13} = \mbox{FCallose}$.

All interactions in \cite{Pritchard2014} are assumed to be mass-action except for $X_{10} \to X_{5}$ which is inhibited by $X_4$ according to the competitive inhibition reaction rate
\begin{equation}
\label{inhibition}
\frac{V_{max}x_{10}}{ \frac{K_m}{K_i}x_4 + x_{10} + K_m}
\end{equation}
where $V_{max}, K_i, K_m > 0$ are parameters.  We have replaced the reaction $X_{10} \to X_5$ with the reaction set $r_{17}$ through $r_{21}$ in \eqref{zigzag} 
% \begin{equation}
% \label{inhibit}
% \begin{tikzcd}
% E+A \arrow[r,  yshift=+0.5ex, "r_{18}"] & EA \arrow[l,yshift=-0.5ex,"r_{19}"] \arrow[r,"r_{20}"] & E_{int} + A \\
% E+Callose \arrow[r,  yshift=+0.5ex, "r_{21}"] & EC \arrow[l,yshift=-0.5ex,"r_{20}"] &
% \end{tikzcd}
% \end{equation}
to reflect the activity of an unseen activator ($X_{11} = \mbox{F}$) and inhibition of $X_{11}$ by $X_4$. The quasi-steady-state approximation for the production of $X_5$ is given by \eqref{inhibition} with $V_{max} = k_{20} (x_{11}(0) + x_{12}(0))$, $K_i = \frac{k_{23}}{k_{22}}$ and $K_m = \frac{k_{19}+k_{20}}{k_{18}}$ \cite{Ingalls}. Consequently, the steady states of the mass-action system we use and the original system of ordinary differential equations studied in \cite{Pritchard2014} coincide.

The program outlined in Section \ref{sec:blp} Step 2 constructs the reaction-to-reaction graph given in Figure \ref{zigzagr2r}(a), which is CS- and EM-compatible with \eqref{zigzag}.  The process outlined in Section \ref{sec:blp} Step 3 yields the following network, which is a weakly reversible, deficiency zero structural translation of \eqref{zigzag}, and is PS-, CS-, and EM-compatible with the reaction-to-reaction graph in Figure \ref{zigzagr2r}(a):
\begin{equation} \label{zigzagtranslated}
\begin{tikzcd}
X_1 + X_2 \arrow[r, yshift=0.5ex, "r_1"] & X_3 \arrow[l, yshift=-0.5ex, "r_2"] \arrow[r, yshift = 0.5ex, "r_3"] & X_3 + X_4 \arrow[l, yshift = -0.5ex, "r_4~\&~r_5"] & \\[-0.15in]
X_5 + X_6 \arrow[r, yshift=0.5ex, "r_6"] & X_7 \arrow[l, yshift=-0.5ex, "r_7"]& &\\[-0.15in]
X_9 + X_{10} + X_{11} \arrow[dddd, xshift=0.5ex,"r_{17}"] \arrow[r, yshift=0.5ex,"r_{14}"] & X_9 + X_{11} \arrow[ddddr,xshift=0.4ex,yshift=0.4ex,"r_{9}"] \arrow[r, yshift=0.5ex,"r_{13}~\&~r_{15}~\&~r_{16}"] \arrow[l,yshift=-0.5ex,"r_{10}"] & X_{11} \arrow[l, yshift=-0.5ex, "r_{8}"] & \\[-0.15in]
& & & \\[-0.15in]
& & & \\[-0.15in]
& & & \\[-0.15in]
X_9 + X_{12} \arrow[uuuu,xshift=-0.5ex,"r_{18}"] \arrow[r,"r_{19}"] & X_5 + X_9 + X_{11} \arrow[uuuu,"r_{12}"] & X_1 + X_9 +X_{11} \arrow[uuuul,xshift=-0.4ex,yshift=-0.4ex,"r_{11}"] & \\[-0.15in]
X_4 + X_{11} \arrow[r,yshift=0.5ex,"r_{20}"] & X_{13} \arrow[l,yshift=-0.5ex,"r_{21}"] & &
\end{tikzcd}
\end{equation}

\noindent In Appendix \ref{app:zigzag}, we show how the structural translation \eqref{zigzagtranslated} can be used to construct a steady state parametrization of the corresponding mass-action system \eqref{eqn:de}.

% It can be verified that the structural translation \eqref{zigzagtranslated} satisfies the requirements of part 1 of Theorem 14 of \cite{J-M-P}. In particular, the kinetic deficiency is also zero (see Appendix \#\#\# for details). Together with the structural translation \eqref{zigzagtranslated}, this result implies that the steady state set of the mass-action system corresponding to \eqref{zigzag} has the following rational parametrization:
% \begin{equation}
% \label{parametrization}
% \small \begin{aligned}
% x_1 & = \frac{ k_9 k_{12}(k_{17} k_{19}+(k_{18}+k_{19})\sigma_2)\sigma_1}{k_5k_8k_{10}k_{17}k_{19}} & x_8 & = \frac{k_{12}(k_{13}+\sigma_3+\sigma_4)((k_{17}+\sigma_2)k_{19}+k_{18}\sigma_2) \sigma_1}{ k_5k_{10}k_{11}  k_{17} k_{19}}\\
% x_2 & = \frac{k_2 (k_4+\sigma_1)k_5 k_{10} k_{11}  k_{17}k_{19} \sigma_4 }{ k_1 k_3  k_9k_{12}k_{16} (k_{17} k_{19}+(k_{18} + k_{19}) \sigma_2) \sigma_1}& x_9 & = \frac{ k_{12}(k_{17} k_{19}+(k_{18}+k_{19}) \sigma_2)\sigma_1 }{k_5 k_{10} k_{17} k_{19}}\\
% x_3 & = \frac{(k_4+\sigma_1)\sigma_4 }{ k_3k_{16}}& x_{10} & = \frac{k_{12} (k_{18}+k_{19})\sigma_1 \sigma_2 }{k_5 k_{14}k_{17} k_{19}}\\
% x_4 & = \frac{\sigma_4}{k_{16}}&x_{11} & = \frac{k_{14}}{\sigma_2}\\
% x_5 & = \frac{\sigma_1}{k_5}&x_{12} & = \frac{ k_{12}\sigma_1}{k_5 k_{19}}\\
% x_6 & = \frac{ k_5 k_7\sigma_3}{k_6k_{15} \sigma_1 }&x_{13} & = \frac{ k_{14} k_{20}\sigma_4}{k_{16} k_{21}\sigma_2}\\
% x_7 & = \frac{\sigma_3}{k_{15}}\\
% \end{aligned}
% \end{equation}
% \noindent in the parameters $\sigma_1, \sigma_2, \sigma_3, \sigma_4 \in \mathbb{R}_{> 0}$.

\subsection{Example: MAPK Model}
\label{sec:mapk}

Consider the following model of a mitogen-activated protein kinase (MAPK) cycle, which corresponds to {\tt biomd0000000026} in the BioModels database \cite{BioModels,M-H-K}:
\begin{equation}
\label{mapk}
\begin{tikzcd}
X+K \arrow[r,  yshift=+0.5ex, "r_1"] & XK \arrow[l,yshift=-0.5ex,"r_2"] \arrow[r,"r_3"] & X_p + K \arrow[r,yshift=+0.5ex, "r_4"] & X_pK \arrow[l,yshift=-0.5ex,"r_5"] \arrow[r,"r_6"]  & X_{pp}+K \\[-0.15in]
X_{pp}+M \arrow[r,  yshift=+0.5ex, "r_7"] & X_{pp}M \arrow[l,yshift=-0.5ex,"r_8"] \arrow[r,"r_9"] & X_p M \arrow[r,yshift=+0.5ex, "r_{10}"] & X_p + M \arrow[l,yshift=-0.5ex,"r_{11}"]  & \\[-0.15in]
X_{p}+M \arrow[r,  yshift=+0.5ex, "r_{12}"] & X^*_{p}M \arrow[l,yshift=-0.5ex,"r_{13}"] \arrow[r,"r_{14}"] & X M \arrow[r,yshift=+0.5ex, "r_{15}"] & X + M \arrow[l,yshift=-0.5ex,"r_{16}"]  & \\[-0.15in]
\end{tikzcd}
\end{equation}
The program outlined in Section \ref{sec:blp} Step 2 constructs the reaction-to-reaction graph given in Figure \ref{zigzagr2r}(b). The following weakly reversible, deficiency zero structural translation can then be constructed by the procedure outlined in Section \ref{sec:blp} Step 3:
\begin{equation}
\label{mapk-translated}
\begin{tikzcd}
X+K+M \arrow[d,xshift=+0.5ex,"r_{16}"] \arrow[r,  yshift=+0.5ex, "r_1"] & XK+M \arrow[l,yshift=-0.5ex,"r_2"] \arrow[r,"r_3"] & X_p + K +M \arrow[dr,xshift=+0.5ex,"r_{11}"] \arrow[dl,xshift=-0.5ex,"r_{12}"'] \arrow[r,yshift=+0.5ex, "r_4"] & X_pK +M \arrow[l,yshift=-0.5ex,"r_5"] \arrow[r,"r_6"]  & X_{pp}+K +M \arrow[d,xshift=+0.5ex,"r_{7}"]\\
XM + K \arrow[u,xshift=-0.5ex,"r_{15}"] & X^*_pM + K \arrow[ru,yshift=-0.5ex,"r_{13}"'] \arrow[l,"r_{14}"] & & X_pM + K \arrow[ul,yshift=-0.5ex,"r_{10}"]  & X_{pp}M+K \arrow[u,xshift=-0.5ex,"r_8"] \arrow[l,"r_9"]
\end{tikzcd}
\end{equation}

\noindent In Appendix \ref{app:mapk}, we show how the structural translation \eqref{zigzagtranslated} can be used to construct a steady state parametrization of the corresponding mass-action system \eqref{eqn:de} and guarantee the capacity for multistationarity according to Corollary 2 of \cite{C-F-M-W2016}.

\section{Conclusions}
\label{sec:conclusions}

We have presented a procedure for constructing structural translations which are weakly reversible and deficiency zero. The backbone of the algorithm is binary-linear programming (BLP) problem for determining a suitable \emph{reaction-to-reaction graph}. This graph treats the reactions of the given CRN as vertices in a new graph. We show that constructing a reaction-to-reaction graph satisfying two conditions on the edges (CS- and EM-compatibility) guarantees that a weakly reversible, deficiency zero structural translation may be constructed by imposing one further condition on the reaction-to-reaction graph (PS-compatibility). Crucially, BLP problems can be solved in polynomial time in the number of constraints by Lenstra's algorithm \cite{Lenstra1983} so that this represents a significant improvement in scalability compared to existing methods for constructing weakly reversible, deficiency zero translations.

This work presents several avenues for future work.
\begin{enumerate}
\item
The procedure outlined in Section \ref{sec:blp} is only able to produce weakly reversible, deficiency zero structural translations, which corresponds to translating all stoichiometric generators in the set of elementary flux modes into cyclic generators. Applications exist, however, for translations which are not necessarily weakly reversible or deficiency zero (e.g. absolute concentration robustness, \cite{Tonello2017,Sh-F}). Future work will focus on adapting the procedure outlined in Section \ref{sec:blp} to account for CRNs where some stoichiometric generators are not translated into cyclic generators.
\item
Recent results of \cite{J-M-P} give sufficient conditions for the parametrization of the steady state set of a generalized chemical reaction network which is weakly reversible and has a structural deficiency of zero (this is called the effective deficiency in \cite{J-M-P}). Other recent results have established conditions for mono and multistationarity within stoichiometric compatibilities \cite{C-F-M-W2016}. Integrating the structural translation procedure introduced in Section \ref{sec:blp} into a unified program for applying the results of \cite{J-M-P} and \cite{C-F-M-W2016} is ongoing. In Appendix \ref{app:a}, we outline the steps involved in this approach on the examples contained in Section \ref{sec:zigzag} and \ref{sec:mapk}.
\end{enumerate}

\noindent \textbf{Acknowledgments:}  MDJ was supported by the Henry Woodward Fund. EB was supported by the Office of Research and College of Science of San Jos\'{e} State University.

%\bibliography{myrefs}
%\bibliographystyle{plain}

\appendix

\section{Appendix - Parametrization Method}
\label{app:a}

While two structural translations have the same stoichiometric matrices $\Gamma$ and $\Gamma'$, they may nevertheless have different mass-action systems \eqref{eqn:de} due to differences in $R(\mathbf{x})$. In this Appendix, we outline the method by which a steady state parametrization may be constructed from a structural parametrization as constructed by the algorithm presented in Section \ref{sec:blp}. 

For ease of notation and continuity, rather than repeat the technical definitions and Theorems of \cite{MR2014} and \cite{J-M-P}, we outline the parametrization procedure through examples.

\subsection{Histidine Kinase Model}
\label{app:parametrization}

We use the histidine kinase network in Figure \ref{figure1} (left) as a motivating example. Through application of the algorithm presented in Section \ref{sec:blp}, we were able to correspond the following CRN (left) with the indicated structural translation (right):
\[
\begin{sideways}\hspace{-0.36in}\mbox{\textbf{Network 1}}\end{sideways} \; \left\{ \; \;\begin{tikzcd}
X \arrow[r, "r_1"] & X_p \\[-0.15in]
X_p + Y \arrow[r,  yshift=+0.5ex,"r_2"] & X + Y_p \arrow[l,  yshift=-0.5ex,"r_3"]  \\[-0.15in]  
Y_p \arrow[r, "r_4"] & Y
\end{tikzcd} \right.
\; \; \; \; \; \Longleftrightarrow \; \; \; \; \;
\begin{sideways}\hspace{-0.36in}\mbox{\textbf{Network 2}}\end{sideways} \; \left\{ \; \; \begin{tikzcd}
X+Y \arrow[rr,yshift=+0.5ex,"r_1"] & & 
X_p+Y \arrow[dl,xshift=1ex,"r_2"]\\
 & X+Y_p \arrow[ur,xshift=-1ex,"r_3"] \arrow[ul,"r_4"] &
\end{tikzcd} \right.
\]

Although these two networks have the same reaction vectors (i.e. $\Gamma = \Gamma'$), the dynamical equations \eqref{eqn:de} do not coincide due to differences in $R(\mathbf{x})$. Specifically, the source complex of $r_1$ and $r_4$ differ in Network 1 from Network 2. To accommodate this difference, we map the source complexes from Network 1 into a secondary set of complexes known as kinetic-order complexes in Network 2. We can represent this with the following network:
\begin{equation} \label{examplegcrn2}
\begin{tikzcd}
\ovalbox{$\begin{array}{c}  1 \\ \\ \end{array}\Bigg\lvert \begin{array}{c} X+Y \\ (X) \end{array}$} \arrow[r,yshift=+0.5ex,"r_1"] & 
\ovalbox{$\begin{array}{c}  2 \\ \\ \end{array}\Bigg\lvert\begin{array}{c} X_p+Y \\ (X_p+Y) \end{array}$} \arrow[d,xshift=0.5ex,"r_2"]\\
\ovalbox{$\begin{array}{c}  4 \\ \dag \\ \end{array}\Bigg\lvert\begin{array}{c} X+Y_p \\ (Y_p) \end{array}$} \arrow[u,"r_4"] & 
\ovalbox{$\begin{array}{c}  3 \\ \dag \\ \end{array}\Bigg\lvert\begin{array}{c} X+Y_p \\ (X+Y_p) \end{array}$} \arrow[u,xshift=-0.5ex,"r_3"]
\end{tikzcd}
\end{equation}

The network \eqref{examplegcrn2} is an example of a \emph{generalized chemical reaction network} (GCRN) \cite{MR2012,MR2014}. In a GCRN, each vertex is assigned two complexes: a \emph{stoichiometric complex} (unbracketed) and a \emph{kinetic-order complex} (bracketed). In the corresponding \emph{generalized mass-action system}
\begin{equation}
\label{eqn:de2}
\frac{d\mathbf{x}}{dt} = \Gamma \tilde R(\mathbf{x})
\end{equation}
the reaction vectors forming $\Gamma$ are determined by the differences of the stoichiometric complexes while the monomials in $\tilde R(\mathbf{x})$ are determined by the kinetic-order complexes. Denote the $i$th kinetic-order complex by $\tilde y_i$, we have that $\tilde R(\mathbf{x})$ has entries $\tilde R_i(\mathbf{x}) = \prod_{j=1}^m x_j^{[\tilde y_{s(i)}]_j}$. For example, the term in $\tilde R(\mathbf{x})$ corresponding to $r_1$ in \eqref{examplegcrn2} is $k_1 x$ rather than the stoichiometrically-determined $k_1 x y$. It can be easily checked that the dynamical equations \eqref{eqn:de2} corresponding to \eqref{examplegcrn2} coincide with the dynamical equations \eqref{eqn:de} corresponding to Network 1.

Note that when converting from Network 2 to \eqref{examplegcrn2}, we split the vertex $X+Y_p$. Consequently, the stoichiometric complex $X+Y_p$ is repeated at vertexes 3 and 4 in \eqref{examplegcrn2} (indicted with $\dag$). This is allowed by \cite{MR2014,J-M-P} and, in fact, required since $r_3$ and $r_4$ would otherwise have multiple kinetic-order complexes at a single vertex (although, with some supplemental conditions, this is allowed in \cite{J2,Tonello2017}). %(Several papers have defined special circumstances where a representation with multiple kinetic-order complexes at a single vertex is allowed \cite{J1,Tonello2017}.)
In order to regain weak reversibility, the authors of \cite{J-M-P} introduce a new set of edges (called ``phantom reactions") which connect stoichiometrically identical complexes. Notice that introducing such reactions introduces zero columns in $\Gamma$ and therefore does not alter the corresponding dynamical equations \eqref{eqn:de2}.

For technical reasons, the authors of \cite{J-M-P} imposed further rules upon the splitting of stoichiometric complexes and the introduction of phantom reactions. They define equivalence classes of stoichiometrically identical complexes and select from within each such class a distinguished vertex (indicated with a $\star$). The set of phantom reactions is then introduced such that: 
\begin{enumerate}
\item
All ``true reactions'' (i.e from the set $\R$) which have their product at any vertex in this equivalence class have the distinguished vertex as its product.
\item
The phantom reactions between vertexes on this equivalence class consist only of reactions with the distinguished complex as its source and the remaining complexes as the product.
\end{enumerate}
We may interpret the distinguished vertices as hubs through which are all paths through an equivalence class of stoichiometrically identical complexes must pass. Such a construction produces a $V^{\star}$-directed GCRN which is important in the construction of positive parametrizations \cite{J-M-P}.

For the network \eqref{examplegcrn2}, we select vertex 3 as the distinguish vertex (indicated with $\star$) and label the phantom edge with a free parameter $\sigma$:
\begin{equation} \label{examplegcrn}
\begin{tikzcd}
\ovalbox{$\begin{array}{c}  1 \\ \\ \end{array}\Bigg\lvert \begin{array}{c} X+Y \\ (X) \end{array}$} \arrow[r,yshift=+0.5ex,"r_1"] & 
\ovalbox{$\begin{array}{c}  2 \\ \\ \end{array}\Bigg\lvert \begin{array}{c} X_p+Y \\ (X_p+Y) \end{array}$} \arrow[d,xshift=0.5ex,"r_2"]\\
\ovalbox{$\begin{array}{c}  4 \\ \dag \\ \end{array}\Bigg\lvert \begin{array}{c} X+Y_p \\ (Y_p) \end{array}$} \arrow[u,"r_4"] & 
\ovalbox{$\begin{array}{c}  3 \star \\ \dag\\ \end{array}\Bigg\lvert \begin{array}{c} X+Y_p \\ (X+Y_p) \end{array}$} \arrow[u,xshift=-0.5ex,"r_3"] \arrow[l,red,"\sigma"]
\end{tikzcd}
\end{equation}
Notice that only $r_2$ has a product in the equivalence class of vertexes $\{3, 4\}$ and its product is the distinguished complex $3$ (condition 1), and the only reaction on vertexes $\{3, 4 \}$ goes from the distinguished vertex $3$ to the remaining vertex $4$ (condition 2). The GCRN \eqref{examplegcrn} is therefore $V^{\star}$-directed.

In \cite{J-M-P}, the authors show that, if the deficiency of the structural translation (called the \emph{effective deficiency} in \cite{J-M-P}) is zero and the corresponding GCRN is $V^{\star}$-directed, then the positive steady state set of the original dynamical system \eqref{eqn:de} can be characterized by the \emph{complex-balanced} steady states of the dynamical system \eqref{eqn:de2}, namely, the equation
\begin{equation}
\label{complexbalanced}
A_k \tilde R(\mathbf{x}) = 0
\end{equation}
where $A_k \in \mathbb{R}^{n \times n}$ is the Laplacian of the reaction graph of the GCRN. For the network \eqref{examplegcrn}, this corresponds to the system:
\[\left[ \begin{array}{cccc} -k_1 & 0 & 0 & k_4 \\ k_1 & - k_2 & k_3 & 0 \\ 0 & k_2 & -k_3 - \sigma & 0 \\ 0 & 0 & \sigma & - k_4 \end{array} \right] \left[ \begin{array}{c} x \\ x_py \\ xy_p \\ y_p \end{array} \right] = \left[ \begin{array}{c} 0 \\ 0 \\ 0 \\ 0 \end{array} \right]\]
Relationships between $\mbox{ker}(A_k)$ and the steady state set of mass-action systems has been studied extensively in recent years. It is known that, for weakly reversible networks, $\mbox{ker}(A_k)$ can be characterized by algebraic combinations of the rate constants of a network known as ``tree constants'' \cite{J1,J-M-P} which we summarize in Appendix \ref{app:procedure}. For this network, we can directly compute that $\mbox{ker}(\tilde A_k) = \mbox{span} \{ K_1, K_2, K_3, K_4\}$ where
\[\begin{aligned}
K_1 & = k_2k_4\sigma,&
K_2 & = k_1(k_3 + \sigma)k_4,&
K_3 & = k_1k_2k_4, &
K_4 & = k_1k_2 \sigma
\end{aligned}\]
are the tree constants. The steady state condition $(x, x_py,xy_p,y_p) \in \mbox{ker}(\tilde A_k)$ gives the implicit equations
\[\frac{x}{k_2k_4\sigma} = \frac{x_py}{k_1(k_3+\sigma)k_4} = \frac{xy_p}{k_1k_2k_4} = \frac{y_p}{k_1k_2\sigma}.\]
Taking pairwise differences, this gives the following log-linear system of equations:
\begin{equation}
\label{loglinear}
\left[ \begin{array}{cccc} -1 & 1 & 1 & 0 \\ 0 & 0 & 0 & 1 \\ -1 & 0 & 0 & 1 \end{array} \right] \left[ \begin{array}{c} \ln(x) \\ \ln(x_p) \\ \ln(y) \\ \ln(y_p) \end{array} \right] = \left[ \begin{array}{c} \frac{k_1(k_3 + \sigma)}{k_2\sigma} \\ \frac{k_1}{\sigma} \\ \frac{k_1}{k_4} \end{array} \right]\end{equation}
Surprisingly, the solvability of the system \eqref{loglinear} depends on the deficiency of network \eqref{examplegcrn} taken with only the kinetic-order complexes:
\begin{equation} \label{kinetic}
\begin{tikzcd}
X \arrow[r,yshift=+0.5ex,"r_1"] & 
X_p+Y \arrow[d,xshift=0.5ex,"r_2"]\\
Y_p \arrow[u,"r_4"] & 
X+Y_p \arrow[u,xshift=-0.5ex,"r_3"] \arrow[l,"\sigma"]
\end{tikzcd}
\end{equation}
The deficiency of the network \eqref{kinetic} is known as the \emph{kinetic-order deficiency} \cite{MR2012,MR2014,J-M-P}. We can compute that the deficiency of \eqref{kinetic} is zero so that the kinetic-order deficiency of \eqref{examplegcrn} is zero. Consequently, the log-linear system \eqref{loglinear} is guaranteed to be consistent and therefore have a solution for all values of rate constants (including $\sigma$) \cite{J-M-P}. For this example, \eqref{loglinear} can be solved for the log concentrations, which can then be exponentiated to give the following parametrization:
\[
\left \{ \quad
%\begin{array}{ll} 
\begin{aligned}
x &= \frac{k_4}{\sigma} , & y &= \tau , \\ 
x_p &= \frac{k_1(k_3+\sigma)k_4}{k_2\sigma^2\tau}, \; \; \; \; \; & y_p &= \frac{k_1}{\sigma},
%\end{array} 
\end{aligned}
\right.
\]
where $\sigma, \tau > 0$ are positive parameters. Notice that the parameter $\tau$ has arisen from parametrizing the nullspace of the coefficient matrix in \eqref{loglinear}, which is the span of the vector $(0,-1,1,0)$.

\subsection{General Procedure for Parametrizations}
\label{app:procedure}

For a given GCRN, we let $\tilde{y}_i$ denote the kinetic-order complex at the vertex labeled $i$ and define $\mathcal{T} \subseteq \R$ to be the set of all trees which span the linkage class containing the vertex $i$. The tree constants $K_i$ corresponding to the vertex labeled $i$ is given by
\begin{equation}
\label{treeconstant}
K_i = \sum_{T \in \mathcal{T}} \prod_{r_i \in T} k_i.
\end{equation}
By Lemma 12 of \cite{J-M-P}, if the GCRN has a structural deficiency of zero, we have the following representations of the steady state set of the corresponding generalized mass-action system:
\begin{equation}
\label{implicit}
\frac{\mathbf{x}^{\tilde y_i}}{K_i} = \frac{\mathbf{x}^{\tilde y_j}}{K_j} \; \; \; \Longleftrightarrow \; \; \; (\tilde y_j - \tilde y_i)^T \ln(\mathbf{x}) = \ln\left( \frac{K_j}{K_i} \right)
\end{equation}
for all vertices $i$ and $j$ belonging to the same linkage class. We can use the log-linear equation on the right of \eqref{implicit} to construct a linear system in the log concentrations. We define a matrix $M$ such columns $M_{\cdot, k} = \tilde y_j - \tilde{y_i}$ and a vector $b$ with entries $b_k = \ln(K_j/K_i)$ where the pairs $(i,j)$ are chosen to be a maximal set of such that the resulting set spans the vertices of the underlying GCRN and does not have any nontrivial cycles. This process produces the following log-linear system
\begin{equation}
\label{loglinear2}
M^T \ln(\mathbf{x}) = b.
\end{equation}
A structural deficiency of zero guarantees all steady states can be found by solving \eqref{loglinear2} (Lemma 12, \cite{J-M-P}). A kinetic-order deficiency of zero guarantees the solvability of this system for all values of the rate constants (Theorem 14 part 1, \cite{J-M-P}). A GCRN with a nonzero kinetic-order deficiency, however, may still produce a solvable system \eqref{loglinear2} provided certain supplemental conditions on the rate parameters are satisfied (Theorem 14 part 2, \cite{J-M-P}).

The example in Appendix \ref{app:parametrization} suggests the following general procedure for determining a positive steady state parametrization for mass-action systems \eqref{eqn:de}:
\begin{enumerate}
\item[]
\textbf{Step 1:} Construct a weakly reversible, deficiency zero structural translation by the algorithm presented in Section \ref{sec:blp}.
\item[]
\textbf{Step 2:} Transfer source complexes from the original CRN as kinetic-order complexes in the network GCRN, splitting stoichiometric complexes as necessary.
\item[]
\textbf{Step 3:} Within each equivalence class of stoichiometrically identical complexes, select distinguished vertices and phantom edges so that the resulting GCRN is $V^{\star}$-directed. Note that by \cite{J-M-P} the choice of distinguished vertices may be made arbitrarily.
\item[]
\textbf{Step 4:} Compute the \emph{kinetic-order deficiency}. (The deficiency of the network with only the kinetic-order complexes from the $V^{\star}$-directed network found in Step 3.) If the kinetic-order deficiency is zero, skip to Step 5; otherwise proceed to Step 4*.
\item[]
\textbf{Step 4*:} Determine a basis $\{ c_1, \ldots, c_{\tilde \delta} \}$ of $\mbox{ker}(M)$ and for every vector $c_i$ attempt to solve the system $c_i^T b = 0$ for the phantom edge parameters $\sigma_j$. If these conditions cannot be satisfied, the procedure fails. Otherwise, substitute the solved parameters $\sigma_j$ into the GCRN constructed in Step 3 and proceed to Step 5.
\item[]
\textbf{Step 5:} Compute the ``tree constants'' at each vertex of this $V^{\star}$-directed GCRN.
\item[]
\textbf{Step 6:} Set up and solve the log-linear system \eqref{loglinear2} for the concentrations.
\end{enumerate}

%Several theoretical results underline our ability to perform the steps outlined above. Since the deficiency of the structural translation is zero by Step 1, we are guaranteed that the positive steady states are characterized by the solutions of the log-linear system in Step 5. Also, if the kinetic-order deficiency is zero, the log-linear system in Step 5 is guaranteed to have a solution.

\subsection{ZigZag Model Example}
\label{app:zigzag}

Reconsider the zigzag model of plant-pathogen interactions \eqref{zigzag}. We now outline how the steps described in Appendix \ref{app:procedure} apply to this network.

\paragraph*{Step 1:} We were able to use the algorithm described in Section \ref{sec:blp} to determine the following structural translation:
\begin{equation} \label{zigzagtranslated2}
\begin{tikzcd}
X_1 + X_2 \arrow[r, yshift=0.5ex, "r_1"] & X_3 \arrow[l, yshift=-0.5ex, "r_2"] \arrow[r, yshift = 0.5ex, "r_3"] & X_3 + X_4 \arrow[l, yshift = -0.5ex, "r_4~\&~r_5"] & \\[-0.15in]
X_5 + X_6 \arrow[r, yshift=0.5ex, "r_6"] & X_7 \arrow[l, yshift=-0.5ex, "r_7"]& &\\[-0.15in]
X_9 + X_{10} + X_{11} \arrow[dddd, xshift=0.5ex,"r_{17}"] \arrow[r, yshift=0.5ex,"r_{14}"] & X_9 + X_{11} \arrow[ddddr,xshift=0.4ex,yshift=0.4ex,"r_{9}"] \arrow[r, yshift=0.5ex,"r_{13}~\&~r_{15}~\&~r_{16}"] \arrow[l,yshift=-0.5ex,"r_{10}"] & X_{11} \arrow[l, yshift=-0.5ex, "r_{8}"] & \\[-0.15in]
& & & \\[-0.15in]
& & & \\[-0.15in]
& & & \\[-0.15in]
X_9 + X_{12} \arrow[uuuu,xshift=-0.5ex,"r_{18}"] \arrow[r,"r_{19}"] & X_5 + X_9 + X_{11} \arrow[uuuu,"r_{12}"] & X_1 + X_9 +X_{11} \arrow[uuuul,xshift=-0.4ex,yshift=-0.4ex,"r_{11}"] & \\[-0.15in]
X_4 + X_{11} \arrow[r,yshift=0.5ex,"r_{20}"] & X_{12} \arrow[l,yshift=-0.5ex,"r_{21}"] & &
\end{tikzcd}
\end{equation}
As expected by the algorithm, this network is weakly reversible and deficiency zero. It follows from Lemma 12 of \cite{J-M-P} that all of the steady states can be found by setting up and solving the log-linear system \eqref{loglinear2}.

\paragraph*{Steps 2 \& 3:} Notice that the complexes $X_3 + X_4$, $X_9 + X_{10} + X_{11}$, and $X_9 + X_{11}$ have multiple source complexes which are translated to them from \eqref{zigzag}. %These include:
%\[\begin{aligned}
%X_3 + X_4 & \; \Longrightarrow \{ r_4\}, \{r_5\} \\
%X_9 + X_{10} + X_{11} & \; \Longrightarrow \; \{ r_{14} \}, \{ r_{17}\} \\
%X_9 + X_{11} & \; \Longrightarrow \; \{ r_9, r_{10}, r_{13} \}, \{ r_{14}\}, \{ r_{15} \}.
%\end{aligned}\]
We therefore split these vertices in \eqref{zigzagtranslated2} when assigning kinetic-order complexes.  %This gives:
We also need to select distinguish complexes and add phantom edges to satisfy the conditions of being $V^\star$-directed given in Appendix \ref{app:parametrization}. This can be accomplished by the following network, where the phantom edges are labeled with $\sigma_i$, $i=1, \ldots, 4$, the equivalence classes of stoichiometrically identical complexes are indicated with the symbols $\mathsection$, $\dag$, and $\ddag$, and the distinguished vertices are indicated with $\star$.
\begin{equation}\label{mess3}\footnotesize
\begin{tikzcd}[column sep=0.5cm]
\mbox{\ovalbox{$\begin{array}{c}  1 \\ \\ \end{array}\Bigg\lvert \begin{array}{c} X_1 + X_2\\ (X_1 + X_2) \end{array}$}} \arrow[r,yshift=0.5ex,"r_1"] & \mbox{\ovalbox{$\begin{array}{c}  2 \\ \\ \end{array}\Bigg\lvert \begin{array}{c} X_3\\ (X_3) \end{array}$}} \arrow[l,yshift=-0.5ex,"r_2"] \arrow[r,yshift=0.5ex,"r_3"] 
& \mbox{\ovalbox{$\begin{array}{c}  3\star \\ \mathsection \\ \end{array}\Bigg\lvert \begin{array}{c} X_3 + X_4\\ (X_4) \end{array}$}} \arrow[l,yshift=-0.5ex,"r_4"] \arrow[d,red,"\sigma_1"]  \\
\mbox{\ovalbox{$\begin{array}{c}  5 \\ \\ \end{array}\Bigg\lvert \begin{array}{c} X_5 + X_6\\ (X_5 + X_6) \end{array}$}} \arrow[r,yshift=0.5ex,"r_6"] & \mbox{\ovalbox{$\begin{array}{c}  6 \\ \\ \end{array}\Bigg\lvert \begin{array}{c} X_7\\ (X_7) \end{array}$}} \arrow[l,yshift=-0.5ex,"r_7"] & \mbox{\ovalbox{$\begin{array}{c}  4 \\ \mathsection \\ \end{array}\Bigg\lvert \begin{array}{c} X_3 + X_4\\ (X_4 + X_5) \end{array}$}} \arrow[lu,"r_5"]\\
\mbox{\ovalbox{$\begin{array}{c} 7 \\ \dag \\ \end{array}\Bigg\lvert \begin{array}{c} X_9 + X_{10} + X_{11} \\ (X_{10}) \end{array}$}} \arrow[dr,"r_{14}"] & \mbox{\ovalbox{$\begin{array}{c}  8 \\ \ddag \\ \end{array}\Bigg\lvert \begin{array}{c} X_9 + X_{11}\\ (X_7 + X_9) \end{array}$}} \arrow[dr,"r_{15}"] & \mbox{\ovalbox{$\begin{array}{c} 9 \\ \ddag \\ \end{array}\Bigg\lvert \begin{array}{c} X_9 + X_{11}\\ (X_4 + X_9) \end{array}$}} \arrow[d,"r_{16}"] \\
\mbox{\ovalbox{$\begin{array}{c} 10 \star \\ \dag \\ \end{array}\Bigg\lvert \begin{array}{c} X_9 + X_{10} + X_{11} \\ (X_{10} + X_{11}) \end{array}$}} \arrow[u,red,"\sigma_2"] \arrow[d,xshift=0.5ex,"r_{17}"]  & \mbox{\ovalbox{$\begin{array}{c}  11 \star \\ \ddag \\ \end{array}\Bigg\lvert \begin{array}{c} X_9 + X_{11}\\ (X_9) \end{array}$}} \arrow[l,"r_{10}"] \arrow[u,red,"\sigma_3"] \arrow[rd,xshift=0.4ex,yshift=0.4ex,"r_{9}"] \arrow[r,yshift=0.5ex,"r_{13}"] \arrow[ru,red,"\sigma_4" pos=0.8,bend left=24] & \mbox{\ovalbox{$\begin{array}{c}  12 \\ \\ \end{array}\Bigg\lvert \begin{array}{c} X_{11} \\ (X_8) \end{array}$}} \arrow[l,yshift=-0.5ex,"r_{8}"]\\
\mbox{\ovalbox{$\begin{array}{c}  13 \\ \\ \end{array}\Bigg\lvert \begin{array}{c} X_9 + X_{12}\\ (X_{12}) \end{array}$}} \arrow[u,xshift=-0.5ex,"r_{18}"] \arrow[r,"r_{19}"] & \mbox{\ovalbox{$\begin{array}{c}  14 \\ \\ \end{array}\Bigg\lvert \begin{array}{c} X_5 + X_9 + X_{11}\\ (X_5) \end{array}$}} \arrow[u,"r_{12}"] & \mbox{\ovalbox{$\begin{array}{c}  15 \\ \\ \end{array}\Bigg\lvert \begin{array}{c} X_1 + X_9 + X_{11}\\ (X_1) \end{array}$}} \arrow[lu,xshift=-0.4ex,yshift=-0.4ex,"r_{11}"]\\
\mbox{\ovalbox{$\begin{array}{c}  16 \\ \\ \end{array}\Bigg\lvert \begin{array}{c} X_4 + X_{11}\\ (X_4 + X_{11}) \end{array}$}} \arrow[r,yshift=0.5ex,"r_{20}"] & \mbox{\ovalbox{$\begin{array}{c}  17 \\ \\ \end{array}\Bigg\lvert \begin{array}{c} X_{13}\\ (X_{13}) \end{array}$}} \arrow[l,yshift=-0.5ex,"r_{21}"]&
\end{tikzcd}
\end{equation}

\paragraph*{Step 4:} The kinetic-order deficiency is the deficiency of the CRN produced by considering only the kinetic-order (bracketed) complexes in \eqref{mess3}. %That is, it is the deficiency of the CRN
%\begin{equation}\label{mess3}\small
%\begin{tikzcd}[column sep=0.5cm]
%X_1 + X_2 \arrow[r,yshift=0.5ex,"r_1"] & X_3 \arrow[l,yshift=-0.5ex,"r_2"] \arrow[r,yshift=0.5ex,"r_3"] 
% & X_4 \arrow[l,yshift=-0.5ex,"r_4"] \arrow[d,"\sigma_1"]  \\
% X_5 + X_6 \arrow[r,yshift=0.5ex,"r_6"] & X_7 \arrow[l,yshift=-0.5ex,"r_7"] & X_4 + X_5 \arrow[lu,yshift=-0.5ex,"r_5"] \\
% X_{10} \arrow[dr,"r_{14}"] & X_7 + X_9 \arrow[dr,"r_{15}" pos=0.75] & X_4 + X_9 \arrow[d,"r_{16}"] \\
% X_{10} + X_{11} \arrow[u,"\sigma_2"] \arrow[d,xshift=0.5ex,"r_{17}"]  & X_9 \arrow[l,"r_{10}"] \arrow[u,"\sigma_3"] \arrow[rd,xshift=0.4ex,yshift=0.4ex,"r_{9}"] \arrow[r,yshift=0.5ex,"r_{13}"] \arrow[ru,"\sigma_4" pos=0.9] & X_8 \arrow[l,yshift=-0.5ex,"r_{8}"]\\
% X_{12} \arrow[u,xshift=-0.5ex,"r_{18}"] \arrow[r,"r_{19}"] & X_5 \arrow[u,"r_{12}"] & X_1 \arrow[lu,xshift=-0.4ex,yshift=-0.4ex,"r_{11}"]\\
% X_4 + X_{11} \arrow[r,yshift=0.5ex,"r_{20}"] & X_{13} \arrow[l,yshift=-0.5ex,"r_{21}"]&
% \end{tikzcd}
% \end{equation}
It can be quickly computed that the deficiency is $\delta = n - \ell - s = 17-4-13 = 0$. It follows from Theorem 14 of \cite{J-M-P} that the remainder of the steps may be performed to yield a steady state parametrization.

\paragraph*{Step 5:} From \eqref{mess3}, we compute the following tree constants:
\[ \small \begin{aligned}
K_1 & = k_2k_5(k_4+\sigma_1)&K_{10} & = k_8k_{10}k_{11}k_{12}k_{14}k_{15}k_{16}(k_{18}+k_{19})\\
K_2 & = k_1(k_4+\sigma_1)k_5&K_{11} & = k_8k_{11}k_{12}k_{14}k_{15}k_{16}(k_{17}k_{19}+(k_{18}+k_{19})\sigma_2)\\
K_3 & = k_1k_3k_5&K_{12} & = k_{11}k_{12}(\sigma_3+\sigma_4+k_{13})k_{14}k_{15}k_{16}((\sigma_2+k_{17})k_{19}+k_{18}\sigma_2)\\
K_4 & = k_1k_3\sigma_1&K_{13} & = k_8k_{10}k_{11}k_{12}k_{14}k_{15}k_{16}k_{17}\\
K_5 & = k_7&K_{14} & = k_8k_{10}k_{11}k_{14}k_{15}k_{16}k_{17}k_{19}\\
K_6 & = k_6&K_{15} & = k_8k_9k_{12}k_{14}k_{15}k_{16}(k_{17}k_{19}+(k_{18}+k_{19})\sigma_2)\\
K_7 & = k_8k_{10}k_{11}k_{12}k_{15}k_{16}(k_{18}+k_{19})\sigma_2&K_{16} & = k_{21}\\
K_8 & = k_8k_{11}k_{12}k_{14}k_{16}((\sigma_2+k_{17})k_{19}+k_{18}\sigma_2)\sigma_3&K_{17} & = k_{20}\\
K_9 & = k_8k_{11}k_{12}k_{14}k_{15}((\sigma_2+k_{17})k_{19}+k_{18}\sigma_2)\sigma_4\\
%k_{11}k_{12}k_{14}k_{16}k_{17}k_{19}k_8\sigma_3+k_{11}k_{12}k_{14}k_{16}k_{18}k_8\sigma_2\sigma_3+k_{11}k_{12}k_{14}k_{16}k_{19}k_8\sigma_2\sigma_3\\
%k_{11}k_{12}k_{14}k_{15}k_{17}k_{19}k_8\sigma_4+k_{11}k_{12}k_{14}k_{15}k_{18}k_8\sigma_2\sigma_4+k_{11}k_{12}k_{14}k_{15}k_{19}k_8\sigma_2\sigma_4\\
% (k_{11}k_{12}k_{13}k_{14}k_{17}k_{19}+k_{11}k_{12}k_{13}k_{14}k_{18}\sigma_2+k_{11}k_{12}k_{13}k_{14}k_{19}\sigma_2+k_{11}k_{12}k_{14}k_{17}k_{19}\sigma_3+k_{11}k_{12}k_{14}k_{17}k_{19}\sigma_4+k_{11}k_{12}k_{14}k_{18}\sigma_2\sigma_3+k_{11}k_{12}k_{14}k_{18}\sigma_2\sigma_4+k_{11}k_{12}k_{14}k_{19}\sigma_2\sigma_3+k_{11}k_{12}k_{14}k_{19}\sigma_2\sigma_4)k_{15}k_{16}\\
\end{aligned} \]

\paragraph*{Step 6:} The log-linear system \eqref{loglinear2} can be set-up for any maximal set of pairs of vertices lying in the same linkage class. We take the pairs
\[\small \{ 1, 2\}, \{1, 3\}, \{1, 4\}, \{5, 6\}, \{7, 8\}, \{7, 9\}, \{7, 10\}, \{7, 11\}, \{7, 12\}, \{7, 13\}, \{7, 14\}, \{7, 15\}, \{16, 17\}.\]
This gives the following linear system in the log concentrations \eqref{loglinear2}:
\[\small \left[ \begin{array}{ccccccccccccc}-1&-1&1&0&0&0&0&0&0&0&0&0&0\\-1&-1&0&1&0&0&0&0&0&0&0&0&0\\-1&-1&0&1&1&0&0&0&0&0&0&0&0\\0&0&0&0&-1&-1&1&0&0&0&0&0&0\\0&0&0&0&0&0&1&0&1&-1&0&0&0\\0&0&0&1&0&0&0&0&1&-1&0&0&0\\0&0&0&0&0&0&0&0&0&0&1&0&0\\0&0&0&0&0&0&0&0&1&-1&0&0&0\\0&0&0&0&0&0&0&1&0&-1&0&0&0\\0&0&0&0&0&0&0&0&0&-1&0&1&0\\0&0&0&0&1&0&0&0&0&-1&0&0&0\\1&0&0&0&0&0&0&0&0&-1&0&0&0\\0&0&0&-1&0&0&0&0&0&0&-1&0&1\end{array} \right] \left[ \begin{array}{c} \ln(x_1) \\ \ln(x_2) \\ \ln(x_3) \\ \ln(x_4) \\ \ln(x_5) \\ \ln(x_6) \\ \ln(x_7) \\ \ln(x_8) \\ \ln(x_9) \\ \ln(x_{10}) \\ \ln(x_{11}) \\ \ln(x_{12}) \\ \ln(x_{13}) \end{array} \right] = \left[ \begin{array}{c} \ln\left( K_2/K_1 \right) \\ \ln\left( K_3/K_1 \right) \\ \ln\left( K_4/K_1 \right) \\ \ln\left( K_6/K_5 \right) \\ \ln\left( K_8/K_7 \right) \\ \ln\left( K_9/K_7 \right) \\ \ln\left( K_{10}/K_7 \right) \\ \ln\left( K_{11}/K_7 \right) \\ \ln\left( K_{12}/K_7 \right) \\ \ln\left( K_{13}/K_7 \right) \\ \ln\left( K_{14}/K_7 \right) \\ \ln\left( K_{15}/K_7 \right) \\ \ln\left( K_{17}/K_{16} \right) \end{array} \right]\]

\noindent Since the kinetic-order deficiency is zero, this is a consistent system and therefore guaranteed to have a solution for all rate constants (Theorem 14, \cite{J-M-P}). Solving the system for $\ln(x_i)$ and then exponentiating gives the following solution, which is a rational parametrization of the steady state set of the mass-action system \eqref{eqn:de} corresponding to \eqref{zigzag} in the parameters $\sigma_1, \sigma_2, \sigma_3, \sigma_4 \in \mathbb{R}_{> 0}$:
\begin{equation}
\label{parametrization}
\small \begin{aligned}
x_1 & = \frac{ k_9 k_{12}(k_{17} k_{19}+(k_{18}+k_{19})\sigma_2)\sigma_1}{k_5k_8k_{10}k_{17}k_{19}} & x_8 & = \frac{k_{12}(k_{13}+\sigma_3+\sigma_4)((k_{17}+\sigma_2)k_{19}+k_{18}\sigma_2) \sigma_1}{ k_5k_{10}k_{11}  k_{17} k_{19}}\\
x_2 & = \frac{k_2 (k_4+\sigma_1)k_5 k_{10} k_{11}  k_{17}k_{19} \sigma_4 }{ k_1 k_3  k_9k_{12}k_{16} (k_{17} k_{19}+(k_{18} + k_{19}) \sigma_2) \sigma_1}& x_9 & = \frac{ k_{12}(k_{17} k_{19}+(k_{18}+k_{19}) \sigma_2)\sigma_1 }{k_5 k_{10} k_{17} k_{19}}\\
x_3 & = \frac{(k_4+\sigma_1)\sigma_4 }{ k_3k_{16}}& x_{10} & = \frac{k_{12} (k_{18}+k_{19})\sigma_1 \sigma_2 }{k_5 k_{14}k_{17} k_{19}}\\
x_4 & = \frac{\sigma_4}{k_{16}}&x_{11} & = \frac{k_{14}}{\sigma_2}\\
x_5 & = \frac{\sigma_1}{k_5}&x_{12} & = \frac{ k_{12}\sigma_1}{k_5 k_{19}}\\
x_6 & = \frac{ k_5 k_7\sigma_3}{k_6k_{15} \sigma_1 }&x_{13} & = \frac{ k_{14} k_{20}\sigma_4}{k_{16} k_{21}\sigma_2}\\
x_7 & = \frac{\sigma_3}{k_{15}}\\
\end{aligned}
\end{equation}

Notice that this parametrization does not guarantee that for a given initial condition $\mathbf{x}(0) \in \mathbb{R}_{\geq 0}^m$ the parametrization intersects the relevant compatibility class $(\mathbf{x}(0) + S) \cap \mathbb{R}_{\geq 0}^m$. For this example, we can observe that $x_8$ experiences no stoichiometric change in any of the system's interactions and therefore we have $x_8(t)=x_8(0)$ for all $t \geq 0$. This requirement combined with \eqref{parametrization} imposes further conditions on the rate constants which must be satisfied for a positive steady state to exist.

\subsection{MAPK Model Example}
\label{app:mapk}

Reconsider the MAPK model \eqref{mapk}.

\paragraph*{Step 1:} We were able to use the algorithm described in Section \ref{sec:blp} to determine the following structural translation:
\[\begin{tikzcd}
X+K+M \arrow[d,xshift=+0.5ex,"r_{16}"] \arrow[r,  yshift=+0.5ex, "r_1"] & XK+M \arrow[l,yshift=-0.5ex,"r_2"] \arrow[r,"r_3"] & X_p + K +M \arrow[dr,xshift=+0.5ex,"r_{11}"] \arrow[dl,xshift=-0.5ex,"r_{12}"'] \arrow[r,yshift=+0.5ex, "r_4"] & X_pK +M \arrow[l,yshift=-0.5ex,"r_5"] \arrow[r,"r_6"]  & X_{pp}+K +M \arrow[d,xshift=+0.5ex,"r_{7}"]\\
XM + K \arrow[u,xshift=-0.5ex,"r_{15}"] & X^*_pM + K \arrow[ru,yshift=-0.5ex,"r_{13}"'] \arrow[l,"r_{14}"] & & X_pM + K \arrow[ul,yshift=-0.5ex,"r_{10}"]  & X_{pp}M+K \arrow[u,xshift=-0.5ex,"r_8"] \arrow[l,"r_9"]
\end{tikzcd}\]

\paragraph*{Steps 2 \& 3:} The complexes $X+K+M$ and $X_p + K +M$ are both assigned multiple kinetic complexes and therefore must be split. Setting $1$ and $3$ as the distinguished complexes and introducing phantom edges gives the following $V^{\star}$-directed GCRN:
\begin{equation}\label{mess4}\scriptsize
\begin{tikzcd}[column sep=0.5cm]
\mbox{\ovalbox{$\begin{array}{c}  1\star \\ \dag \\ \end{array}\Bigg\lvert \begin{array}{c} X+K+M \\ (X+K) \end{array}$}} \arrow[r,yshift=+0.5ex,"r_1"] \arrow[red,d,"\sigma_1"] & \mbox{\ovalbox{$\begin{array}{c}  2 \\ \\ \end{array}\Bigg\lvert \begin{array}{c} XK+M\\ (XK) \end{array}$}} \arrow[l,yshift=-0.5ex,"r_2"] \arrow[r,"r_3"] 
& \mbox{\ovalbox{$\begin{array}{c}  3\star \\ \ddag \\ \end{array}\Bigg\lvert \begin{array}{c} X_p+K+M\\ (X_p+K) \end{array}$}} \arrow[r,yshift=+0.5ex,"r_4"] \arrow[red,dd,"\sigma_2"]
& \mbox{\ovalbox{$\begin{array}{c}  4 \\  \\ \end{array}\Bigg\lvert \begin{array}{c} X_pK+M\\ (X_pK) \end{array}$}} \arrow[r,"r_6"] \arrow[l,yshift=-0.5ex,"r_5"]
& \mbox{\ovalbox{$\begin{array}{c}  5 \\ \\ \end{array}\Bigg\lvert \begin{array}{c} X_{pp}+K+M\\ (X_{pp}+M) \end{array}$}} \arrow[dd,xshift=+0.5ex,"r_7"] \\
\mbox{\ovalbox{$\begin{array}{c}  11 \\ \dag \\ \end{array}\Bigg\lvert \begin{array}{c} X+K+M\\ (X+M) \end{array}$}} \arrow[d,"r_{16}"] & & & & \\
\mbox{\ovalbox{$\begin{array}{c}  10 \\ \\ \end{array}\Bigg\lvert \begin{array}{c} XM+K\\ (XM) \end{array}$}} \arrow[uu,yshift=0.5ex,bend right=75,"r_{15}"'] & \mbox{\ovalbox{$\begin{array}{c}  9 \\ \\ \end{array}\Bigg\lvert \begin{array}{c} X^*_pM+K\\ (X^*_pM) \end{array}$}} \arrow[ruu,"r_{13}"] \arrow[l,"r_{14}"'] 
& \mbox{\ovalbox{$\begin{array}{c}  8 \\ \ddag \\ \end{array}\Bigg\lvert \begin{array}{c} X_p+K+M\\ (X_p+M) \end{array}$}} \arrow[l,"r_{12}"'] \arrow[r,"r_{11}"]
& \mbox{\ovalbox{$\begin{array}{c}  7 \\ \\ \end{array}\Bigg\lvert \begin{array}{c} X_pM+K\\ (X_pM) \end{array}$}} \arrow[luu,yshift=-0.5ex,"r_{10}"']
& \mbox{\ovalbox{$\begin{array}{c}  6 \\ \\ \end{array}\Bigg\lvert \begin{array}{c} X_{pp}M+K\\ (X_{pp}M) \end{array}$}} \arrow[uu,xshift=-0.5ex,"r_8"] \arrow[l,"r_9"']\\
\end{tikzcd}
\end{equation}
where $\sigma_1$ and $\sigma_2$ indicate the phantom edges, $\dag$ and $\ddag$ indicate equivalence classes of stoichiometrically identical complexes, and $\star$ indicates the distinguished vertex within each class.

\paragraph*{Step 4:} We can compute that the kinetic-order deficiency is one. We therefore have one condition of the form $c^T b = 0$ where $c \in \mbox{ker}(M)$ to satisfy on the rate constants in order to apply the method prescribed by Theorem 14 of \cite{J-M-P}. We suspend discussion of the construction of the matrix $M$ to Step 6, but note that the required condition is
\begin{equation}
\label{condition}
\frac{(k_{11}+k_{12})\sigma_2}{k_{16}\sigma_1} \; \Longrightarrow \; \sigma_2 = \frac{k_{11}+k_{12}}{k_{16}} \sigma_1.
\end{equation}
That is, we eliminate one of our free parameters to satisfy the condition. Since this result is positive, we may proceed.

\paragraph*{Step 5:} After substituting \eqref{condition} into \eqref{mess4}, we can compute the following tree constants
\[\small \begin{aligned}
K_1 & = (k_2+k_3)(k_5+k_6)k_7k_{9}k_{10}(k_{11}+k_{12})k_{12}k_{14}k_{15}\sigma_1\\
K_2 & = k_1(k_5+k_6)k_7k_{9}k_{10}(k_{11}+k_{12})k_{12}k_{14}k_{15}\sigma_1 \\
K_3 & = k_1k_3 (k_5 + k_6) k_7 k_{9} k_{10}(k_{11} + k_{12}) (k_{13} + k_{14})  k_{15} k_{16} \\
K_4 & = k_1k_3k_4k_7k_{9}k_{10}(k_{11}k_{13}+k_{11}k_{14}+k_{12}k_{13}+k_{12}k_{14})k_{15}k_{16}\\
K_5 & = k_1 k_3 k_4 k_6(k_8 + k_{9}) k_{10} (k_{11} k_{13} + k_{11} k_{14} + k_{12} k_{13}+ k_{12} k_{14})k_{15} k_{16}  \\
K_6 & = k_1k_3k_4k_6k_7k_{10}(k_{11}k_{13}+k_{11}k_{14}+k_{12}k_{13}+k_{12}k_{14})k_{15}k_{16}\\
K_7 & = k_1k_3(\sigma_1(k_5+k_6)k_{11}+k_{16}k_4k_6)k_7k_9(k_{11}+k_{12})(k_{13}+k_{14})k_{15}\\
K_8 & =  k_1k_3(k_5 + k_6)k_7 k_{9} k_{10}  (k_{11}+k_{12})(k_{13} + k_{14}) k_{15}  \sigma_1 \\
K_9 & = k_1k_3(k_5+k_6)k_7k_{9}k_{10}(k_{11}+k_{12})k_{12}k_{15}\sigma_1\\
K_{10} & = (k_2\sigma_1+k_3\sigma_1+k_1k_3)(k_5+k_6)k_7k_{9}k_{10}(k_{11}+k_{12})k_{12}k_{14}\sigma_1\\
K_{11} & = (k_2k_5+k_2k_6+k_3k_5+k_3k_6)k_7k_{9}k_{10}(k_{11}+k_{12})k_{12}k_{14}k_{15}\sigma_1^2/k_{16}
\end{aligned}\]

\paragraph*{Step 6:} The log-linear system \eqref{loglinear2} can be set-up for any maximal set of pairs of vertices lying in the same linkage class. We take the pairs
\[\small \{ 1, 2\}, \{1, 3\}, \{1, 4\}, \{1, 5\}, \{1, 6\}, \{1, 7\}, \{1, 8\}, \{1, 9\}, \{1, 10\}, \{1, 11\}.\]
This gives the following log-linear system \eqref{loglinear2}:
\begin{equation}
\label{loglinear3}
\small \left[ \begin{array}{ccccccccccc}-1&0&0&0&-1&1&0&0&0&0&0\\-1&1&0&0&0&0&0&0&0&0&0\\-1&0&0&0&-1&0&1&0&0&0&0\\-1&0&1&1&-1&0&0&0&0&0&0\\-1&0&0&0&-1&0&0&1&0&0&0\\-1&0&0&0&-1&0&0&0&1&0&0\\-1&1&0&1&-1&0&0&0&0&0&0\\-1&0&0&0&-1&0&0&0&0&1&0\\-1&0&0&0&-1&0&0&0&0&0&1\\0&0&0&1&-1&0&0&0&0&0&0\end{array} \right] \left[ \begin{array}{c} \ln(X) \\ \ln(X_p) \\ \ln(X_{pp}) \\ \ln(M) \\ \ln(K) \\ \ln(XK) \\ \ln(X_pK) \\ \ln(X_{pp}M) \\ \ln(X_pM) \\ \ln(X^*_pM) \\ \ln(XM) \end{array} \right] = \left[ \begin{array}{c} \ln\left( K_2/K_1 \right) \\ \ln\left( K_3/K_1 \right) \\ \ln\left( K_4/K_1 \right) \\ \ln\left( K_5/K_1 \right) \\ \ln\left( K_6/K_1 \right) \\ \ln\left( K_7/K_1 \right) \\ \ln\left( K_8/K_1 \right) \\ \ln\left( K_9/K_1 \right) \\ \ln\left( K_{10}/K_1 \right) \\ \ln\left( K_{11}/K_1 \right) \end{array} \right]
\end{equation}
Note that, in Step 4, we used the left kernel vector $c = (0,1,0,0,0,0,-1,0,0,1)$ of the coefficient matrix $M^T$ of \eqref{loglinear3}. Since we have satisfied the condition $c^T b = 0$ with \eqref{condition}, this is a consistent system. We can solve this system and exponentiate to obtain the following steady state parametrization:
\begin{equation}
\label{parametrization2}
\small \begin{aligned}
X & = \frac{(k_2+k_3)k_{15}\tau_1}{(k_2+k_3)\sigma_1+k_1k_3}&X_pK & = \frac{k_1k_3k_4(k_{13}+k_{14})k_{15}k_{16}\tau_1\tau_2}{((k_2+k_3)\sigma_1+k_1k_3)(k_5+k_6)k_{12}k_{14}\sigma_1}\\
X_p & = \frac{k_1k_3(k_{13}+k_{14})k_{15}k_{16}\tau_1}{((k_2+k_3)\sigma_1+k_1k_3)k_{12}k_{14}\sigma_1}& X_{pp}K & = \frac{k_1k_3k_4k_6(k_{13}+k_{14})k_{15}k_{16}\tau_1\tau_2}{((k_2+k_3)\sigma_1+k_1k_3)(k_5+k_6)k_9k_{12}k_{14}\sigma_1}\\
X_{pp} & = \frac{k_1k_3k_4k_6(k_8+k_9)(k_{13}+k_{14})k_{15}k_{16}^2\tau_1}{((k_2+k_3)\sigma_1+k_1k_3)(k_5+k_6)k_7k_9k_{12}k_{14}\sigma_1^2}&X_pM & = \frac{k_1k_3(\sigma_1(k_5+k_6)k_{11}+k_4k_6k_{16})(k_{13}+k_{14})k_{15}\tau_1\tau_2}{((k_2+k_3)\sigma_1+k_1k_3)(k_5+k_6)k_{10}k_{12}k_{14}\sigma_1}\\
M & = \frac{\sigma_1\tau_2}{k_{16}}&X^*_pM & = \frac{k_1k_3k_{15}\tau_1\tau_2}{((k_2+k_3)\sigma_1+k_1k_3)k_{14}}\\
K & = \tau_2& XM & = \tau_1\tau_2\\
XK & = \frac{k_1k_{15}\tau_1\tau_2}{(k_2+k_3)\sigma_1+k_1k_3} & &
%\frac{\tau_2\tau_1k_{15}(k_{11}k_{13}k_5\sigma_1+k_{11}k_{13}k_6\sigma_1+k_{11}k_{14}k_5\sigma_1+k_{11}k_{14}k_6\sigma_1+k_{13}k_{16}k_4k_6+k_{14}k_{16}k_4k_6)k_1k_3}{(k_2\sigma_1+k_3\sigma_1+k_1k_3)(k_5+k_6)\sigma_1k_{14}k_{12}k_{10}}\\
\end{aligned}
\end{equation}
in the parameters $\sigma_1, \tau_1, \tau_2 \in \mathbb{R}_{>0}$.

The parametrization \eqref{parametrization2} is quite useful in the context of determining the capacity for mono and multistationarity within stoichiometric compatibility classes of the mass-action system \eqref{eqn:de} corresponding to the MAPK network \eqref{mapk}. The steady states are not toric so that the results of \cite{M-D-S-C} and \cite{M-F-R-C-S-D} cannot be applied. We can, however, apply the computational procedure of Corollary 2 of \cite{C-F-M-W2016}. To satisfy the assumptions, we note that the network has the following conservation laws and is therefore dissipative:
\[\begin{aligned}
X_{tot} & = X + X_p + X_{pp} + XK + X_pK + XM + X_pM + X_p^*M + X_{pp}M\\
K_{tot} & = K + XK + X_pK \\
M_{tot} & = M + XM + X_pM + X_p^*M + X_{pp}M.
\end{aligned}\]
It also has no critical siphons so that there are no boundary equilibria. Computing the function $a(\hat{x})$ of \cite{C-F-M-W2016} evaluated along the parametrization \eqref{parametrization2} yields a rational function in the three parameters $\sigma_1, \tau_1,$ and $\tau_2$ with a strictly positive denominator. It can be checked that, in the numerator of $a(\hat{x})$, $\tau_1^2 \tau_2^2$ and $\sigma_1^2 \tau_1 \tau_2^3$ are extremal with respect to the corresponding Newton polytope and that the coefficients have mixed sign in $a(\hat{x})$. It follows that the mechanism exhibits multistationarity for some choices of rate constants and initial conditions. It should be noted that the parametrization \eqref{parametrization2} reduces the dimension of the system from $11$ variables to $3$ which allows significantly faster computation and analysis of $a(\hat{x})$.

\end{document}